% !TEX TS-program ={} pdflatexmk

\documentclass[11pt]{article}
%\documentclass[cmp]{svjour}  %envcountsame
%\journalname{Communications in Mathematical Physics}

\usepackage{fullpage}
\usepackage{arydshln}
\setlength{\dashlinegap}{2pt}
\usepackage{latexsym}
\usepackage{amsfonts}
\usepackage{amssymb}
\usepackage{amsmath}
\usepackage{color}
\usepackage{graphicx, graphics}
\usepackage{hyperref}
\usepackage{pstricks, pst-plot, epsfig}
\usepackage{xargs}
\usepackage{amsthm}
\usepackage{enumerate}
\usepackage{subcaption}

\newcommand{\ud}{\,\mathrm{d}}
\renewcommand\Re{\operatorname{Re}}
\renewcommand\Im{\operatorname{Im}}

\newcommand{\CC}{\mathbb{C}}

\newcommand{\RR}{\mathbb{R}}

\newtheorem{prop}{Proposition}

\begin{document}

\title{The Linear KdV Equation with an Interface}
%\titlerunning{The Linear KdV equation with an interface}
\author{Bernard Deconinck$^1$, Natalie E. Sheils$^2$, and David A. Smith$^{3}$ \\
%}\institute{
\footnotesize 1. Department of Applied Mathematics, University of Washington, Seattle, WA 98195-3925 \\
\footnotesize 2. School of Mathematics, University of Minnesota, Minneapolis, MN 55455\\
\footnotesize 3. Division of Science, Yale-NUS College, 138527 Singapore \\
\footnotesize email\textup{: \texttt{nesheils@umn.edu}}
}
%\authorrunning{B. Deconinck, N. Sheils, D. Smith}
\date{\today}
%\date{Received: / }
%communicated{}

\maketitle

%%%%%%%%%%%%%%%%%%%%%%%%%%%%%%%%%%%%%%%%%%%%%%%%%%%
\begin{abstract}
The interface problem for the linear Korteweg-de Vries (KdV) equation in one-dimensional piecewise homogeneous domains is examined by constructing an explicit solution in each domain. The location of the interface is known and a number of compatibility conditions at the boundary are imposed.  We provide an explicit characterization of sufficient interface conditions for the construction of a solution using Fokas's Unified Transform Method.  The problem and the method considered here extend that of earlier papers to problems with more than two spatial derivatives.\end{abstract}
%%%%%%%%%%%%%%%%%%%%%%%%%%%%%%%%%%%%%%%%%%%%%%%%%%%

\section{Introduction}
Interface problems for partial differential equations (PDEs) are initial boundary value problems for which the solution of an equation in one domain prescribes boundary conditions for the equations in adjacent domains. In applications, interface conditions are often obtained from conservation laws~\cite{Kevorkian}. Few interface problems allow for an explicit closed-form solution using classical solution methods. Using the Fokas or Unified Transform Method~\cite{DeconinckTrogdonVasan, FokasBook, FokasPelloni4}, such solutions may be constructed for both dissipative and dispersive linear interface problems as shown in~\cite{Asvestas, DeconinckPelloniSheils,Mantzavinos, SheilsDeconinck_PeriodicHeat, SheilsDeconinck_LS, SheilsDeconinck_LSp, SheilsSmith}. 

All previous papers addressing interface problems using the Fokas Method for interface problems have dealt exclusively with problems that are of second order in the spatial variable.  This paper is the first investigation into higher-order problems.  The process presented in this paper makes clear how to resolve new issues that arise when moving to a higher-order problem.

The nondimensionalized Korteweg-de Vries (KdV) equation $$q_t+6qq_x+q_{xxx}={}0,$$ is one of the most studied nonlinear PDEs~\cite{Hirota, Miura, MiuraGardnerKruskal, ZakharovFaddeev}.  It arises in the study of long waves in shallow water, ion-acoustic waves in plasmas, and in general, describes the slow evolution of long waves in dispersive media~\cite{AS}.  In this manuscript we study the linearized KdV equation (LKdV) in a composite medium,
\begin{equation}\label{LKdV_2i}
q_t={}\sigma(x)q_{xxx},~~~~~~-\infty<x<\infty,
\end{equation}
where $\sigma(x)$, a real-valued function, is piecewise constant.  This equation describes the behavior of solutions of the KdV equation in the small-amplitude limit and understanding its dynamics is fundamental in understanding the dynamics of the more complicated nonlinear problem.

In what follows an explicit solution method is given resulting in closed-form expressions.  We provide criteria which, under the additional assumption of existence of a solution, are sufficient for a solution representation to be obtained via the Fokas Method.  Although we do not prove uniqueness of the solution, we note some examples of interface conditions that do yield uniqueness.  The numerical evaluation of the solution is not considered but should be possible via the methods presented in~\cite{BiondiniTrogdon, Levin, TrogdonThesis, Trogdon}.  As we do not have a physical application on hand, this paper addresses the mathematical question of the number and type of interface conditions required to ensure that~\eqref{LKdV_2i} is well posed.

%%%%%%%%%%%%%%%%%%%%%%%%%%%%%%%%%%%%%%%%%%%%%%%%%%%
\section{Background}\label{sec:motivation}
Determining the number of boundary conditions necessary for a well-posed problem is a nontrivial issue, especially for boundary value problems (BVPs) with higher than second-order derivatives.  Consider LKdV posed on the half line
\begin{equation}\label{halflineKdV}
q_t={}\sigma^3 q_{xxx},~~~~x>0,~~~ t>0,
\end{equation}
where the form of the coefficient $\sigma^3$ is chosen for convenience.  If $\sigma<0$ then one boundary condition is needed, whereas if $\sigma>0$, two boundary conditions must be prescribed in order for the problem to be well posed~\cite{DeconinckTrogdonVasan, FokasBook}.  This difference in seemingly very similar BVPs is understood at an intuitive level by considering the phase velocity $c(k)={}-i\omega(k)/k$ where $\omega(k)={}i\sigma^3 k^3$~\cite{Kevorkian}.   Thus, the phase velocity is $c(k)={}\sigma^3 k^2.$  If $\sigma<0$ the phase velocity is negative and information travels toward the boundary as in Figure~\ref{fig:sink}.    If $\sigma>0$, the phase velocity is positive and information travels away from the boundary as in Figure~\ref{fig:source}.  Therefore, it seems reasonable that one must prescribe more boundary information.  Note that if we were solving~\eqref{halflineKdV} for $x<0$ these results would be switched.  This will become relevant in what follows for the interface problem on the whole line.

\begin{figure}[htbp]
\begin{center}
\begin{subfigure}[b]{.4\textwidth}
\centering
\def\svgwidth{2.5in}
   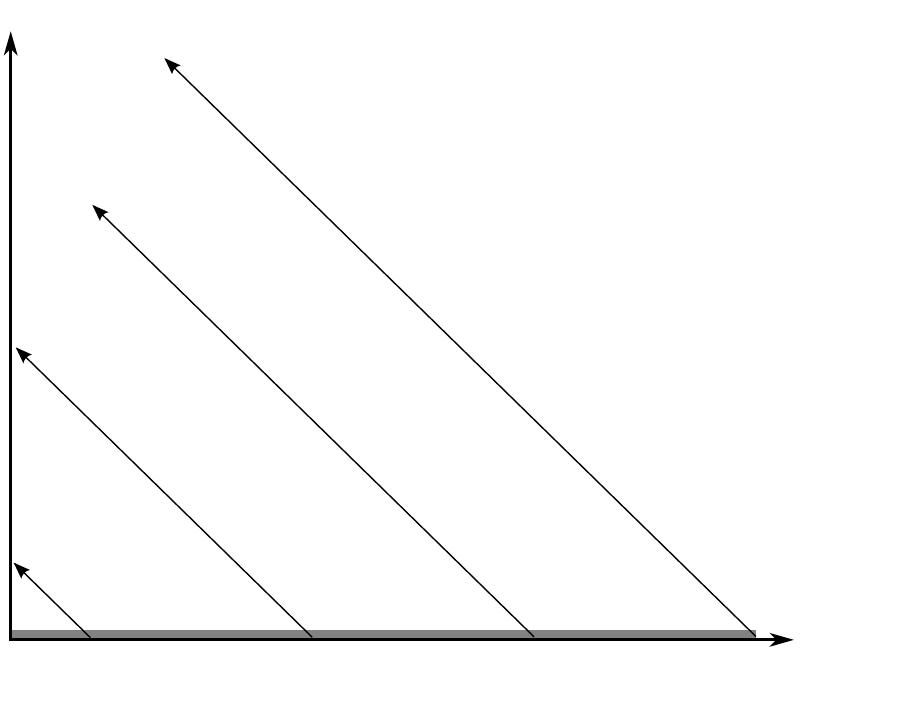 % requires the graphicx package
   \caption{  \label{fig:sink}}
   \end{subfigure}
\begin{subfigure}[b]{.4\textwidth}
\centering
\def\svgwidth{2.5in}
   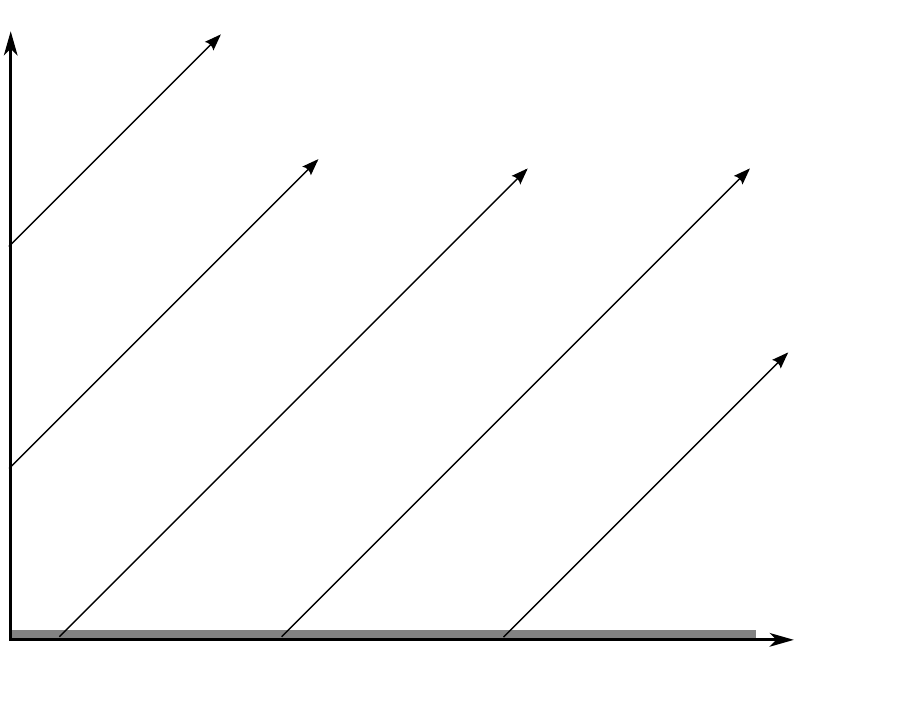 % requires the graphicx package
   \caption{  \label{fig:source}}
   \end{subfigure}
   \caption{(a) When $\sigma<0$ information from the initial condition propagates toward the boundary $x={}0$ and one boundary condition needs to be prescribed. (b) When $\sigma>0$ information from the initial condition propagates away from the boundary and two boundary conditions need to be prescribed.}\label{fig:sinksource}
  \end{center}
\end{figure}

\vspace{.4in}
{\bf Remark.} The above argument hints at the Method of Characteristics.  This analogy is not justified, as the problem at hand is dispersive and energy spreads along phase or group velocity rays, rather than travels along them.  As such, the above argument is entirely heuristic but, with hindsight, it provides some intuition.
\vspace{.4in}

One of the strengths of the Fokas Method for solving linear PDEs is the straightforward way it enables determination of the number and type of boundary conditions required for a well-posed problem~\cite{DeconinckTrogdonVasan, FokasBook, FokasPelloni4}.  Previous papers by us and others~\cite{Asvestas, DeconinckPelloniSheils, Mantzavinos, SheilsDeconinck_PeriodicHeat, SheilsDeconinck_LS, SheilsDeconinck_LSp, SheilsSmith} have shown that the Fokas Method is useful for finding explicit general solutions to interface problems.  In the cases currently in the literature, only second-order problems are considered and the number of conditions required at each interface is clearly two.  The example of LKdV on the half-line suggests that the number of interface conditions needed in the case of LKdV with an interface depends on the sign of $\sigma$.  This is the case indeed.  In Propositions~\ref{prop:sigma_pn}--\ref{prop:sigma_np} we describe exactly the number and type of conditions necessary.

%%%%%%%%%%%%%%%%%%%%%%%%%%%%%%%%%%%%%%%%%%%%%%%%%%%
\section{Notation and Set-Up}
We investigate~\eqref{LKdV_2i} where $\sigma(x)$ is the piecewise constant real-valued function
	\begin{equation}
		\sigma(x) ={} \begin{cases} \sigma_1^3, & x<0, \\ \sigma_2^3, & x>0, \end{cases}
	\end{equation}
with the initial condition $q(x,0)={}q_0(x)$, and appropriate conditions at the interface $x={}0$.  The choice of the power $3$ in the definition of $\sigma(x)$ is purely for convenience.  We assume throughout this work that the solution decays rapidly to zero as $|x|\to\infty$. If nonzero conditions at $|x|={}\infty$ are desired this can be treated  easily in a manner similar to that for the heat equation in~\cite{DeconinckPelloniSheils} and for the linear Schr\"odinger equation in~\cite{SheilsDeconinck_LS}. We pose~\eqref{LKdV_2i} as the following interface problem:

\begin{subequations}\label{LKdVint_2i}
\begin{align}
\label{LKdV1_2i} q^{(1)}_t&={}\sigma_1^3 q^{(1)}_{xxx},& x<0, &&0<t\leq T,\\
\label{LKdV2_2i}  q^{(2)}_t&={}\sigma_2^3 q^{(2)}_{xxx},& x>0, &&0<t\leq T,
\end{align}
\end{subequations}
subject to the initial conditions

\begin{subequations}\label{LKdVic_2i}
\begin{align}
\label{LKdV1ic_2i} q^{(1)}(x,0)&={}q^{(1)}_0(x), &x<0,\\
\label{LKdV2ic_2i}  q^{(2)}(x,0)&={}q^{(2)}_0(x),& x>0,
\end{align}
\end{subequations}
with $q^{(1)}(\cdot,t)\in S(-\infty,0)$ and $q^{(2)}(\cdot,t)\in S(0,\infty)$ where $S(X)$ is the Schwartz space of restrictions to $X$ of rapidly decaying functions. Likewise, we assume rapid decay of the initial conditions, $q_0^{(1)}(\cdot)\in S(-\infty,0)$ and $q_0^{(2)}(\cdot)\in S(0,\infty)$. Note that we do not require $\hat{q}_0^{(1)}(0)={}\hat{q}_0^{(2)}(0)$, rather we assume that the initial data are compatible with the interface conditions, which are specified below.

Some number of interface conditions at $x={}0$ needs to be prescribed.  The number and type of such conditions are given in Propositions~\ref{prop:sigma_pn}--\ref{prop:sigma_np}.  We make a distinction in this manuscript between ``boundary problems" and ``interface problems."  Boundary problems are those in which the conditions given at the interface $x={}0$ allow one to solve either~\eqref{LKdV1_2i} or~\eqref{LKdV2_2i} as a half-line BVP without knowing the solution on the other domain.  For example, if one can solve a BVP for $q^{(1)}(x,t)$ then one can use that solution to provide any necessary conditions at $x={}0$ to solve the second BVP for $q^{(2)}(x,t)$.  Conditions for a well-posed BVP are given in~\cite{FokasBook, WangFokas}.  Since these cases have been examined, we restrict to those interface conditions which do not decouple such that either~\eqref{LKdV1_2i} or~\eqref{LKdV2_2i} can be solved as a BVP.  

It is of note that by making use of the PDE, interface conditions can always be written as
$$
	\left[\mbox{linear combination of }\frac{\partial^n}{\partial x^n} q^{(1)}(x,t)\bigg|_{x={}0} \mbox{ and } \frac{\partial^n}{\partial x^n}q^{(2)}(x,t)\bigg|_{x={}0} \mbox{ for } n={}0,1,2\right] ={} f(t),$$
for all $t$.  For example, one might require $q^{(1)}_{xxx}(0,t)={}q^{(2)}_{xxx}(0,t)$ as an interface condition.  This can be imposed by applying the equation and integrating in $t$ to give
\begin{equation}\label{3deriv_const}
\frac{1}{\sigma_1^3}q^{(1)}(0,t)-\frac{1}{\sigma_2^3}q^{(2)}(0,t)={}\frac{1}{\sigma_1^3}q^{(1)}_0(0)-\frac{1}{\sigma_2^3}q^{(2)}_0(0),
\end{equation} 
for all $t$, which is clearly of the form we require with $f(t)={}\frac{1}{\sigma_1^3}q^{(1)}_0(0)-\frac{1}{\sigma_2^3}q^{(2)}_0(0)$.  Using a similar process for any conditions on derivatives greater than second order as well as elementary linear algebra one can always express the interface conditions in the reduced forms given in Propositions~\ref{prop:sigma_pn}--\ref{prop:sigma_np} possibly after letting $x\to-x$.

\vspace{.4in}
{\bf Remark.}
If an interface condition specifies a linear combination of $\partial_x^nq^{(1)}(0,t)$ (n={}0,1,2) only or $\partial_x^nq^{(2)}(0,t)$ (n={}0,1,2) only, then we say it is a \emph{boundary condition}. Note that the interface conditions $$q^{(1)}(0,t)={}0, \qquad \mbox{and}\qquad q^{(1)}(0,t)-q^{(2)}(0,t)={}0,$$ are equivalent to the interface conditions $$q^{(1)}(0,t)={}0,\qquad\mbox{and}\qquad q^{(2)}(0,t)={}0,$$ so it is only meaningful to discuss the maximum number of boundary conditions for any equivalent expression of a given system of interface conditions. Henceforth any mention of a number of boundary conditions should be interpreted as such a maximum number of boundary conditions.

A problem with one boundary condition may or may not decouple into a pair of BVP.  Even if such a decoupling is possible, it may or may not be possible to solve the BVPs sequentially. For example, the problem with $\sigma_1,\sigma_2>0$, boundary condition $q^{(1)}(0,t)={}0$, and interface conditions $q^{(1)}_x(0,t)={}q^{(2)}_x(0,t)$ and $q^{(1)}_{xx}(0,t)={}q^{(2)}_{xx}(0,t)$ decouples into a solvable BVP for $q^{(1)}$ and a subsequent solvable BVP for $q^{(2)}$. However, the problem with $\sigma_1,\sigma_2<0$, and the same boundary and interface conditions does not decouple.
\vspace{.4in}

\section{Application of the Fokas Method}
We follow the standard steps in the application of the Fokas Method. Assuming existence of a solution, we begin with the so-called ``local relations'':
\begin{equation*}
(e^{-ikx+\omega_j t}q^{(j)})_t={}(e^{-ikx+\omega_j t}\sigma_j^3(q^{(j)}_{xx}+ikq^{(j)}_x-k^2q^{(j)}))_x, 
\end{equation*}
with $\omega_j={}\omega_j(k)={}i\sigma_j^3k^3$ for $j={}1,2$.  Applying Green's Theorem~\cite{AblowitzFokas}  and integrating over the strips $(-\infty,0)\times(0,t)$ and $(0,\infty)\times(0,t)$ respectively (see Figure~\ref{fig:GR_domain2i}) we have the global relations
%\begin{subequations}
\begin{equation*}
\begin{split}
\int_{-\infty}^0 e^{-ikx+\omega_1 t}q^{(1)}(x,t)\ud x&={}\int_{-\infty}^0 e^{-ikx}q^{(1)}_0(x)\ud x\\
&+\int_0^t e^{\omega_1 s}\sigma_1^3\left(q^{(1)}_{xx}(0,s)+ikq^{(1)}_x(0,s)-k^2 q^{(1)}(0,s)\right)\ud s,
\end{split}
\end{equation*}
\begin{equation*}
\begin{split}
\int_{0}^\infty e^{-ikx+\omega_2 t}q^{(2)}(x,t)\ud x&={}\int_{0}^\infty e^{-ikx}q^{(2)}_0(x)\ud x \\
&-\int_0^t  e^{\omega_2 s}\sigma_2^3\left(q^{(2)}_{xx}(0,s)+ikq^{(2)}_x(0,s)-k^2 q^{(2)}(0,s)\right)\ud s.
\end{split}
\end{equation*}
%\end{subequations}

\begin{figure}[t]
\begin{center}
\def\svgwidth{4in}
   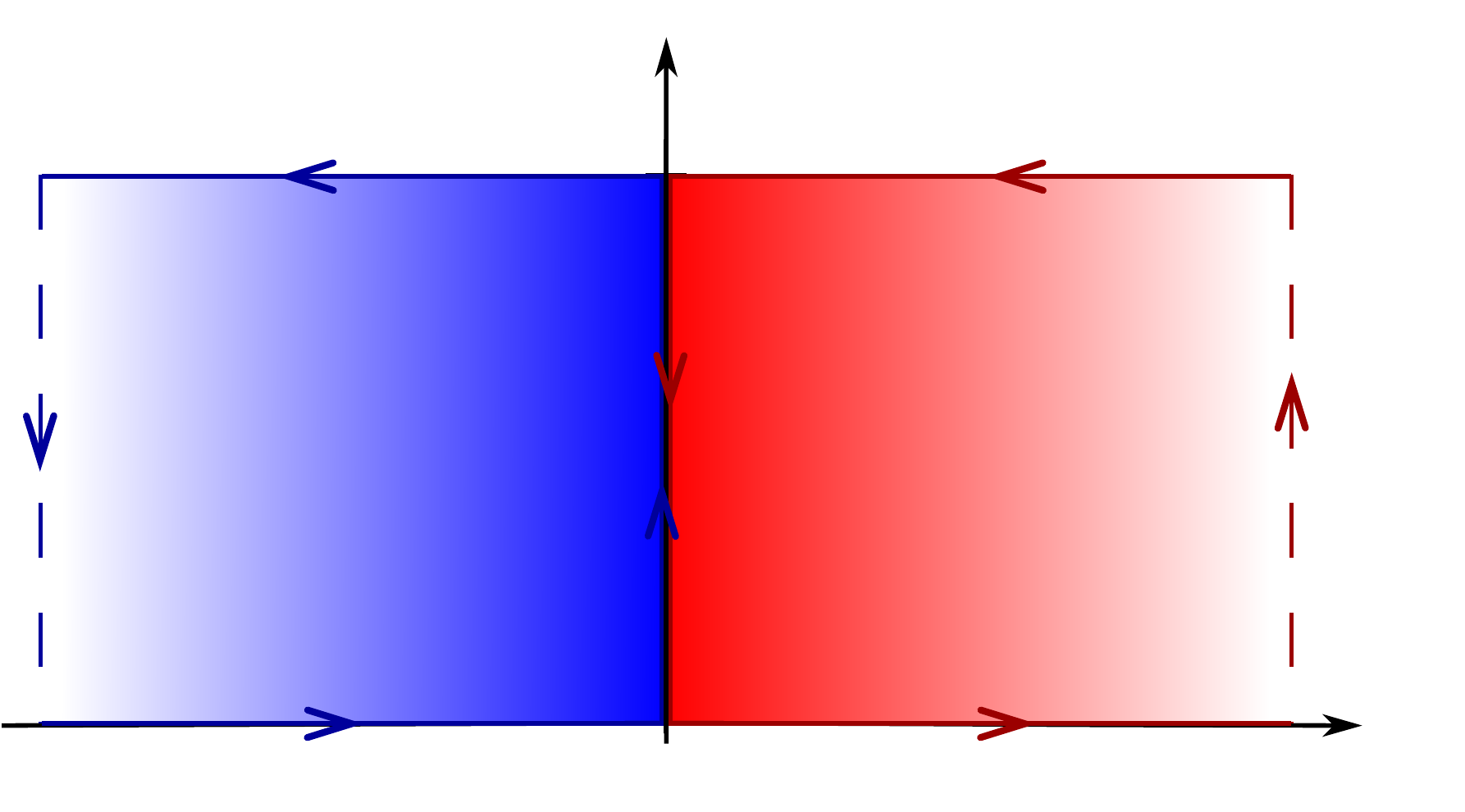 % requires the graphicx package
   \caption{Regions for the application of Green's Formula in the case of two semi-infinite domains.   \label{fig:GR_domain2i}}
  \end{center}
\end{figure}

Let $\CC^+={}\{z\in\CC:\Im(z)\geq0\}$.  Similarly, let $\CC^-={}\{z\in\CC:\Im(z)\leq0\}$.  Define $k={}k_R+i k_I$ where $k_R,k_I\in\RR$.   We define the following:
\begin{align*}
&\hat{q}^{(1)}(k,t)={}\int_{-\infty}^0 e^{-ikx}q^{(1)}(x,t)\ud x, &k_I\geq0,&&0<t<T,\\
&\hat{q}^{(1)}_0(k)={}\int_{-\infty}^0 e^{-ikx}q^{(1)}_0(x)\ud x, &k_I\geq0,&&\\
&\hat{q}^{(2)}(k,t)={}\int_{0}^\infty e^{-ikx}q^{(2)}(x,t)\ud x, &k_I\leq0,& &0<t<T,\\
&\hat{q}^{(2)}_0(k)={}\int_{0}^\infty e^{-ikx}q^{(2)}_0(x)\ud x, &k_I\leq0,&&\\
&g_n(\omega,t)={}\int_0^t e^{\omega s} \frac{\partial^n}{\partial x^n}q^{(1)}(0,s)\ud s, &n={}0,1,2,&&0<t<T,\\
&h_n(\omega,t)={}\int_0^t e^{\omega s} \frac{\partial^n}{\partial x^n}q^{(2)}(0,s)\ud s, &n={}0,1,2,&&0<t<T,\\
\end{align*}

The global relations become
\begin{subequations}\label{GR_2i}
\begin{align}
e^{\omega_1t}\hat{q}^{(1)}(k,t)={}&\hat{q}^{(1)}_0(k)+\sigma_1^3\left(g_2(\omega_1,t)+ikg_1(\omega_1,t)-k^2g_0(\omega_1,t)\right), &k_I\geq0,\\
e^{\omega_2t}\hat{q}^{(2)}(k,t)={}&\hat{q}^{(2)}_0(k)-\sigma_2^3\left(h_2(\omega_2,t)+ikh_1(\omega_2,t)-k^2h_0(\omega_2,t)\right), &k_I\leq0.
\end{align}
\end{subequations}
We wish to transform the global relations so that $g_n(\cdot,t)$ and $h_n(\cdot,t)$ for $n={}0,1,2$ depend on a common argument, $ik^3$ as in~\cite{Asvestas, Mantzavinos}. Noting $ik^3$ is invariant under the transformations $k\to\alpha k$ and $k\to\alpha^2k$ where $\alpha={}e^{2i\pi /3}$ and evaluating at $t={}T$ we have the following six global relations:

\begin{subequations}\label{tGR_2i}
\begin{align}
\begin{split}
e^{ik^3T}\hat{q}^{(1)}\left(\frac{\alpha^j k}{\sigma_1},T\right)={}&\hat{q}^{(1)}_0\left(\frac{\alpha^j k}{\sigma_1}\right)+\left(\sigma_1^3 g_2(ik^3,T)+i\alpha^j k\sigma_1^2g_1(ik^3,T)-\alpha^{2j}k^2\sigma_1g_0(ik^3,T)\right),\\
&\sigma_1\Im(\alpha^j k)\geq0,
\end{split}\\
\begin{split}
e^{ik^3T}\hat{q}^{(2)}\left(\frac{\alpha^j k}{\sigma_2},T\right)={}&\hat{q}^{(2)}_0\left(\frac{\alpha^j k}{\sigma_2}\right)-\left(\sigma_2^3 h_2(ik^3,T)+i\alpha^j k\sigma_2^2 h_1(ik^3,T)-\alpha^{2j}k^2\sigma_2 h_0(ik^3,T)\right), \\
&\sigma_2\Im(\alpha^jk)\leq0,
\end{split}
\end{align}
\end{subequations}
for $j={}0,1,2$.

Inverting the Fourier transform in~\eqref{GR_2i} we have the solution formulas
\begin{subequations}
\begin{equation}
\begin{split}
q^{(1)}(x,t)={}&\frac{1}{2\pi}\int_{-\infty}^\infty e^{ikx-\omega_1t}\hat{q}^{(1)}_0(k)\ud k\\
&+\frac{\sigma_1^3}{2\pi}\int_{-\infty}^\infty e^{ikx-\omega_1t} \left(g_2(\omega_1,t)+ikg_1(\omega_1,t)-k^2g_0(\omega_1,t)\right)\ud k,
\end{split}
\end{equation}
\begin{equation}
\begin{split}
q^{(2)}(x,t)={}&\frac{1}{2\pi}\int_{-\infty}^\infty e^{ikx-\omega_2t}\hat{q}^{(2)}_0(k)\ud k\\
&-\frac{\sigma_2^3}{2\pi}\int_{-\infty}^\infty e^{ikx-\omega_2t}\left(h_2(\omega_2,t)+ik h_1(\omega_2,t)-k^2h_0(\omega_2,t)\right)\ud k,
\end{split}
\end{equation}
\end{subequations}
for $t\in(0,T)$ and $x\in(-\infty,0)$ and $x\in(0,\infty)$ respectively.  Next, we transform the second integral in each of the previous equations so that $g_n(\cdot,t)$ and $h_n(\cdot,t)$ depend on $ik^3$ for $n={}0,1,2$.  

\begin{subequations}\label{solns_2i}
\begin{equation}\label{soln1_2i}
\begin{split}
q^{(1)}(x,t)={}&\frac{1}{2\pi}\int_{-\infty}^\infty e^{ikx-\omega_1t}\hat{q}^{(1)}_0(k)\ud k\\
&+\frac{1}{2\pi}\int_{-\infty}^\infty e^{i\frac{k}{\sigma_1}x-ik^3t} \left(\sigma_1^2g_2(ik^3,t)+ik\sigma_1g_1(ik^3,t)-k^2g_0(ik^3,t)\right)\ud k,
\end{split}
\end{equation}
\begin{equation}\label{soln2_2i} 
\begin{split}
q^{(2)}(x,t)={}&\frac{1}{2\pi}\int_{-\infty}^\infty e^{ikx-\omega_2t}\hat{q}^{(2)}_0(k)\ud k\\
&-\frac{1}{2\pi}\int_{-\infty}^\infty e^{i\frac{k}{\sigma_2}x-ik^3t}\left(\sigma_2^2h_2(ik^3,t)+ik \sigma_2 h_1(ik^3,t)-k^2h_0(ik^3,t)\right)\ud k,
\end{split}
\end{equation}
\end{subequations}

Let $D={}\{k\in\CC:\Re(ik^3)<0\}={}D^{(1)}\cup D^{(3)}\cup D^{(5)}$ as in Figure~\ref{fig:LKdV_D}. The parenthetical numbers in the superscript of $D$ represent an enumeration of the sectors of the complex plane, in contrast to the parenthetical numbers in the superscript of $q$ (and $\Gamma$, below), which represent the two half-line domains $(-\infty,0)$ and $(0,\infty)$.  Let $$D_R={}\{k\in \CC: k\in D \mbox{ and } |k|>R\}={}D_R^{(1)} \cup D_R^{(3)} \cup D_R^{(5)},$$ where $R>0$ is a positive constant as shown in Figure~\ref{fig:LKdV_DR}.  Let $\Gamma^{(j)}$ be the contour $\partial \{ k \in D_R : (-1)^j \sigma_j k_I > 0 \}$, oriented so that $D_R^{(1)}$ and $D_R^{(3)}$ lie to the left, and $D_R^{(5)}$ lies to the right of any $\Gamma^{(j)}$ to which they are adjacent. Note that whether $\Gamma^{(j)}$ is the boundary of $D_R^{(1)} \cup D_R^{(3)}$ or $D_R^{(5)}$ depends not only on $j$ but also upon the sign of $\sigma_j$. The integrand of the second integral in~\eqref{soln1_2i} is analytic and decays as $k\to\infty$ from within the set bounded between $\mathbb{R}$ and $\Gamma^{(1)}$, and the integrand of the second integral in~\eqref{soln2_2i} is analytic and decays as $k\to\infty$ from within the set bounded between $\mathbb{R}$ and $\Gamma^{(2)}$. Hence, by Jordan's Lemma and Cauchy's Theorem, the contours of integration can be deformed from $\mathbb{R}$ to $\Gamma^{(j)}$.

 \begin{figure} 
   \centering
   \def\svgwidth{3in}
   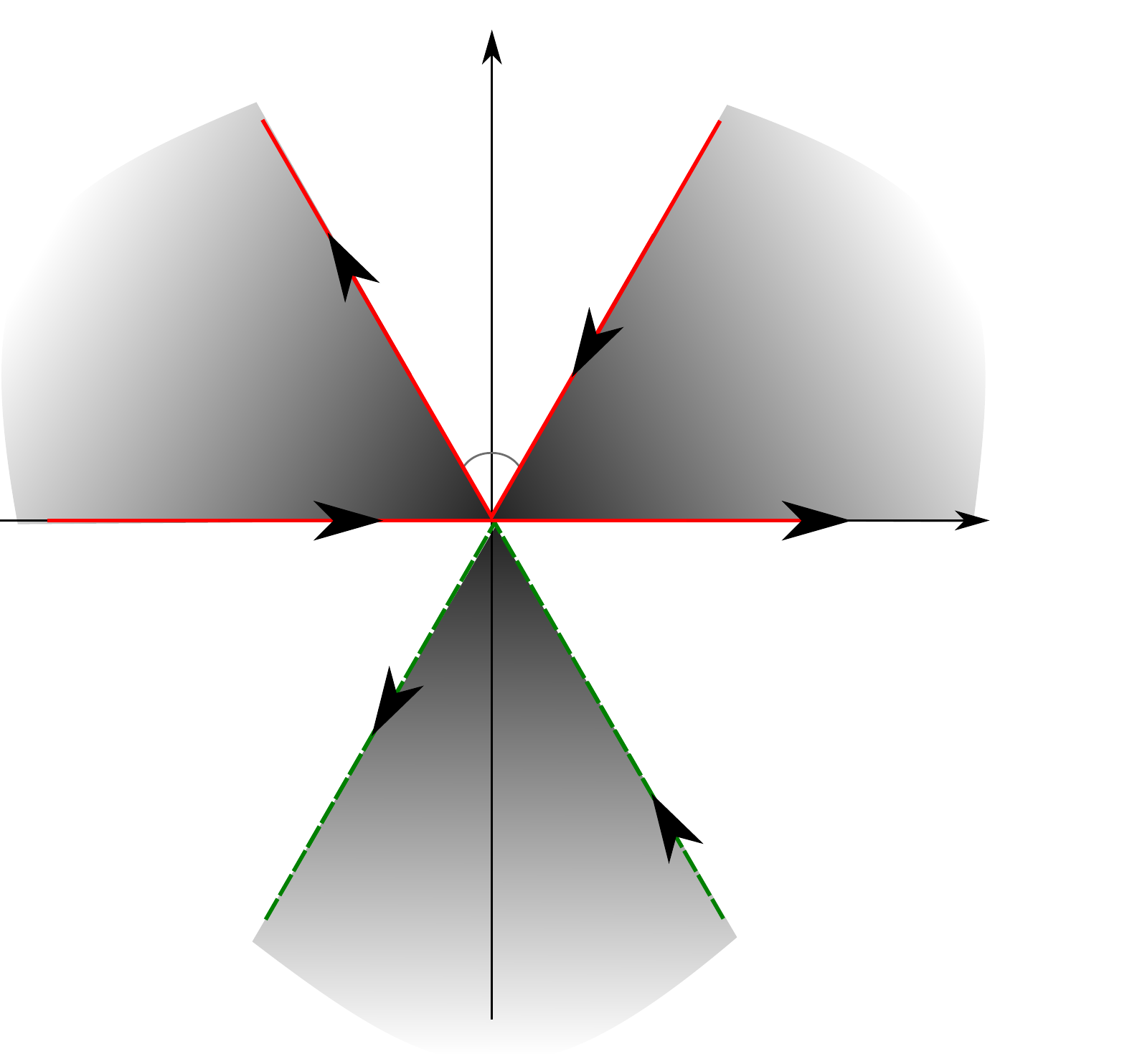
      \caption{The evenly distributed regions $D^{(1)}$, $D^{(3)}$, $D^{(5)}$ where $\Re(ik^3)<0$.
   \label{fig:LKdV_D}}
\end{figure} 
 \begin{figure} 
   \centering
   \def\svgwidth{3in}
   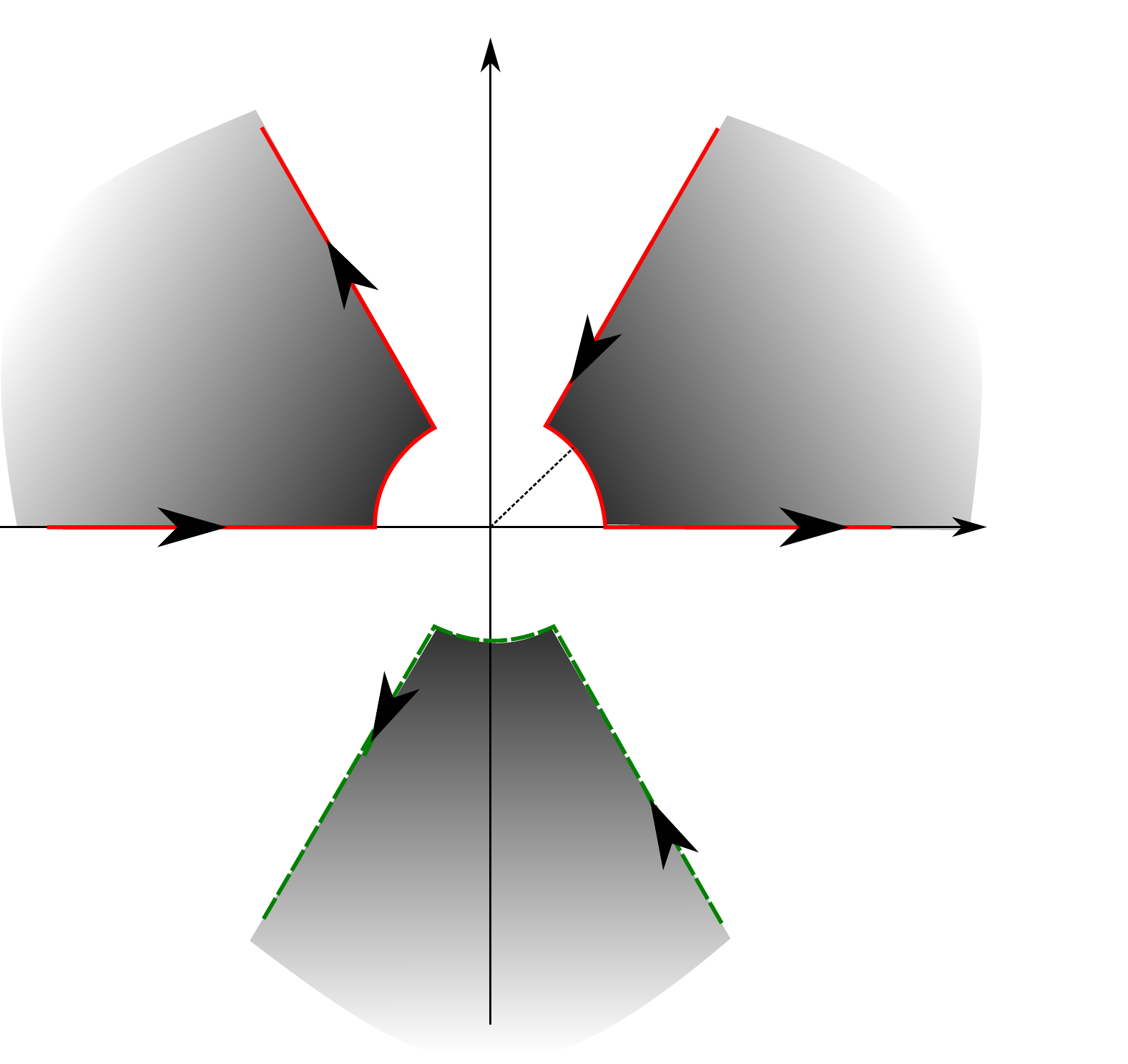
      \caption{The regions $D_R^{(1)}$, $D_R^{(3)}$, $D_R^{(5)}$ where $\Re(ik^3)<0$ and $|k|>R$.
   \label{fig:LKdV_DR}}
\end{figure} 

\begin{subequations}\label{dsolns_2i_t}
\begin{equation}\label{dsoln1_2i}
\begin{split}
q^{(1)}(x,t)={}&\frac{1}{2\pi}\int_{-\infty}^\infty e^{ikx-\omega_1t}\hat{q}^{(1)}_0(k)\ud k\\
&+\frac{1}{2\pi}\int_{\Gamma^{(1)}} e^{i\frac{k}{\sigma_1}x-ik^3t} \left(\sigma_1^2g_2(ik^3,t)+ik\sigma_1g_1(ik^3,t)-k^2g_0(ik^3,t)\right)\ud k,
\end{split}
\end{equation}
\begin{equation}\label{dsoln2_2i} 
\begin{split}
q^{(2)}(x,t)={}&\frac{1}{2\pi}\int_{-\infty}^\infty e^{ikx-\omega_2t}\hat{q}^{(2)}_0(k)\ud k\\
&-\frac{1}{2\pi}\int_{\Gamma^{(2)}} e^{i\frac{k}{\sigma_2}x-ik^3t}\left(\sigma_2^2h_2(ik^3,t)+ik \sigma_2 h_1(ik^3,t)-k^2h_0(ik^3,t)\right)\ud k.
\end{split}
\end{equation}
\end{subequations}
We replace $t$ by $T$ in the arguments of $g_j$ and $h_j$ by noting that this is equivalent to replacing the integral $\int_0^t e^{ik^3} \frac{\partial^n}{\partial x^n} q^{(j)}(0,s)\ud s$ with $\int_0^T e^{ik^3} \frac{\partial^n}{\partial x^n} q^{(j)}(0,s)\ud s-\int_t^T e^{ik^3} \frac{\partial^n}{\partial x^n} q^{(j)}(0,s)\ud s$.  Using analyticity properties of the integrand and Jordan's Lemma, the contribution from the second integral is zero and thus, %we replace $g_j(ik^3,t)$ and $h_j(ik^3,t)$ with $g_j(ik^3,T)$ and $h_j(ik^3,T)$:

\begin{subequations}\label{dsolns_2i_T}
\begin{equation}
\begin{split}
q^{(1)}(x,t)={}&\frac{1}{2\pi}\int_{-\infty}^\infty e^{ikx-\omega_1t}\hat{q}^{(1)}_0(k)\ud k\\
&+\frac{1}{2\pi}\int_{\Gamma^{(1)}} e^{i\frac{k}{\sigma_1}x-ik^3t} \left(\sigma_1^2g_2(ik^3,T)+ik\sigma_1g_1(ik^3,T)-k^2g_0(ik^3,T)\right)\ud k,
\end{split}
\end{equation}
\begin{equation}
\begin{split}
q^{(2)}(x,t)={}&\frac{1}{2\pi}\int_{-\infty}^\infty e^{ikx-\omega_2t}\hat{q}^{(2)}_0(k)\ud k\\
&-\frac{1}{2\pi}\int_{\Gamma^{(2)}} e^{i\frac{k}{\sigma_2}x-ik^3t}\left(\sigma_2^2h_2(ik^3,T)+ik \sigma_2 h_1(ik^3,T)-k^2h_0(ik^3,T)\right)\ud k.
\end{split}
\end{equation}
\end{subequations}
While Equation~\eqref{dsolns_2i_t} makes the functional dependence of the solution more complicated than in Equation~\eqref{dsolns_2i_T}, it is useful for doing long time asymptotics, \emph{i.e.} taking the limit as $t\to\infty$.  Equation~\eqref{dsolns_2i_T} is useful for checking that the expression satisfies the equation.  While the integrands of these expressions are different, the integrals are equal and thus one may switch between them whenever convenient.  

In Section~\ref{sec:results}, we show how it is possible to obtain expressions for all six spectral functions $g_j$, $h_j$ in the relevant domains by solving a linear system.   Indeed, for any $r\in\{1,3,5\}$, if $k\in \overline{D}_R^{(r)}$ (the closure of $D_R^{(r)}$), then a certain number, say $m$, of the global relation equations~\eqref{GR_2i} are valid for $k$. We must supplement these equations with $6-m$ interface conditions to obtain a solvable system. Given the coefficients of $g_j$, $h_j$ in~\eqref{GR_2i}, it is clear that the determinant of the linear system must be a polynomial in $k$. The criteria of Propositions~\ref{prop:sigma_pn}--\ref{prop:sigma_np} identify the cases in which this determinant is not identically $0$, that is the system is full rank. For such a full rank system, it is always possible to choose $R>0$ sufficiently large that $\overline{D}_R$ contains no zeros of the determinant, which is essential in the proof of Proposition~\ref{prop:soln}.  We denote this linear system by

\begin{equation}\label{linsystem}
\mathcal{A}X=Y+e^{ik^3T}\mathcal{Y},
\end{equation}
where 
\begin{equation}\label{LKdV_X}
X={}\left(g_0(ik^3,T),g_1(ik^3,T),g_2(ik^3,T),h_0(ik^3,T),h_1(ik^3,T),h_2(ik^3,T)\right)^\top.
\end{equation}
The right-hand side of~\eqref{linsystem} is expressed as the sum of $Y$, which includes expressions that are known explicitly (\emph{i.e.}, $q^{(j)}_0(\cdot)$, $j={}1,2$ and non-homogenous terms from the interface conditions) and $\mathcal{Y}$ which includes unknown expressions (\emph{i.e.}, $\hat{q}^{(j)}(\cdot,T)$, $j={}1,2$).

%%%%%%%%%%%%%%%%%%%%%%%%%%%%%%%%%%%%%%%%%%%%%%%%%%%
\section{Results}\label{sec:results}
In each of the following propositions, we assume that the interface conditions are not such that the problem reduces to a pair of BVP. It is a matter of trivial linear algebra to determine whether any particular problem has this property, and its well-posedness and solution are then known~\cite{FokasBook, WangFokas}.

In the case $\sigma_1>0$ and $\sigma_2<0$, the phase velocity for $x<0$ is positive and the phase velocity for $x>0$ is negative.  Thus, information from the initial conditions propagates toward the interface as in Figure~\ref{fig:sink_sink}.  In this case we expect the minimal number of interface conditions to be necessary for a well-posed problem.
\begin{figure}[htbp]
\begin{center}
\def\svgwidth{4in}
   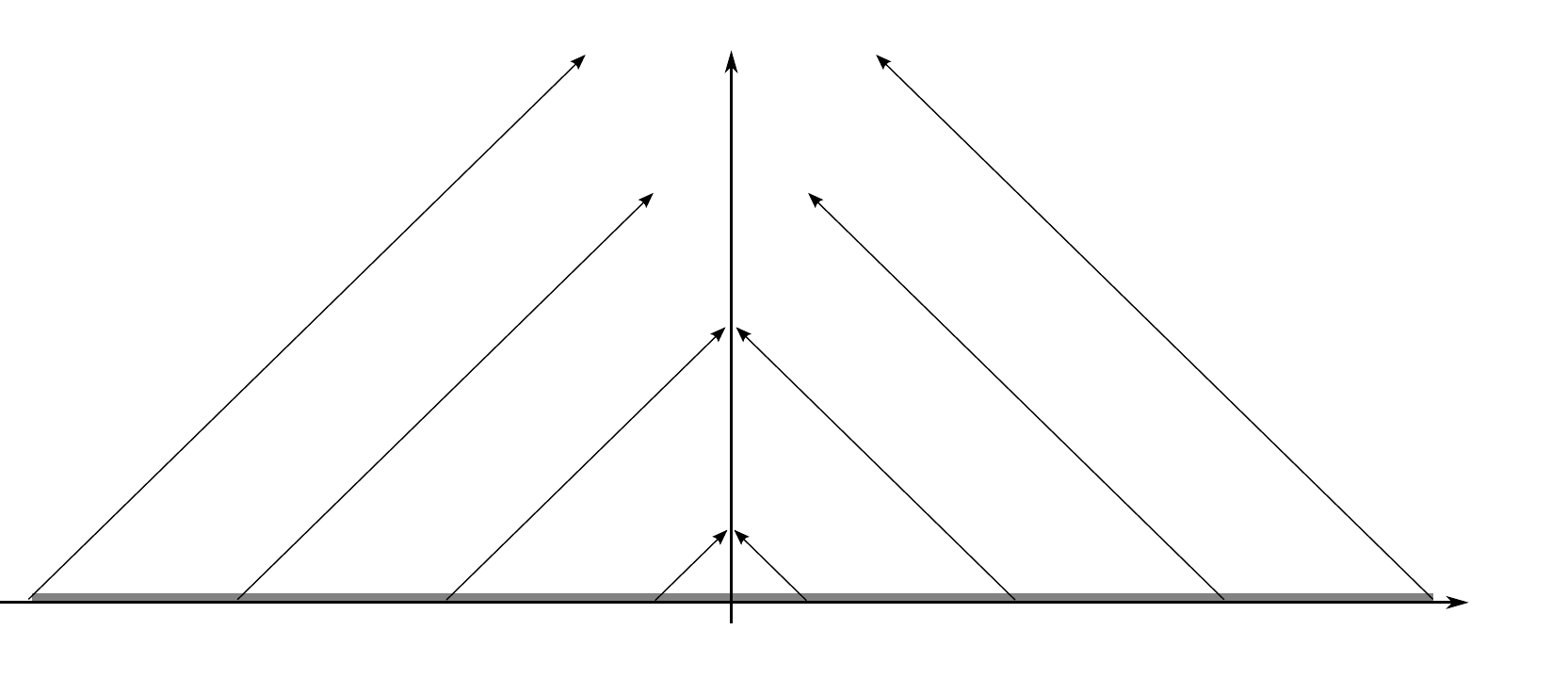 % requires the graphicx package
   \caption{Information from the initial conditions $q^{(1)}(x,0)$ and $q^{(2)}(x,0)$ propagates toward the interface.    \label{fig:sink_sink}}
  \end{center}
\end{figure}

\begin{prop}\label{prop:sigma_pn}
Assume $\sigma_1>0$ and $\sigma_2<0$.  Equation~\eqref{linsystem} is solvable for $X$ if and only if two interface conditions are given.  These conditions must be of the form
\begin{subequations}\label{ics_pn}
\begin{align}
\beta_{11}q^{(1)}(0,t)+\beta_{12} q^{(1)}_{x}(0,t)+\beta_{13} q^{(1)}_{xx}(0,t)+\beta_{14}q^{(2)}(0,t)+\beta_{15}q^{(2)}_x(0,t)+\beta_{16}q^{(2)}_{xx}(0,t)&={}f_1(t),\\
\beta_{21}q^{(1)}(0,t)+\beta_{22}q^{(1)}_x(0,t)+\beta_{23} q^{(1)}_{xx}(0,t)+\beta_{24}q^{(2)}(0,t)+\beta_{25}q^{(2)}_x(0,t)+\beta_{26}q^{(2)}_{xx}(0,t)&={}f_2(t).
\end{align}
\end{subequations}

The solution to~\eqref{linsystem} is full rank, that is, solvable for $X$, whenever at least one of the following holds
\begin{enumerate}
\item $\beta_{14}\beta_{21}\neq\beta_{11}\beta_{24}$,
\item $\sigma_1(\beta_{15}\beta_{21}-\beta_{11}\beta_{25})\neq \sigma_2(\beta_{12}\beta_{24}-\beta_{14}\beta_{22}),$
\item $\sigma_1^2(\beta_{16}\beta_{21}-\beta_{11}\beta_{26})+\sigma_1\sigma_2(\beta_{15}\beta_{22}-\beta_{12}\beta_{25})+\sigma_2^2(\beta_{14}\beta_{24}-\beta_{13}\beta_{24})\neq0,$
\item $\sigma_1(\beta_{16}\beta_{22}-\beta_{12}\beta_{26})\neq\sigma_2(\beta_{13}\beta_{25}-\beta_{15}\beta_{23})$,
\item $\beta_{16}\beta_{23}\neq\beta_{13}\beta_{26}$.
\end{enumerate}
\end{prop}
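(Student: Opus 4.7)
The plan is to reduce the proposition to concrete statements about the $6 \times 6$ linear system \eqref{linsystem} and then compute $\det \mathcal{A}$ as an explicit polynomial in $k$. First I would count the valid global relations in each sector $D^{(r)}$, $r \in \{1,3,5\}$: since $\sigma_1 > 0$ and $\sigma_2 < 0$, both validity conditions $\sigma_1 \Im(\alpha^j k) \geq 0$ and $\sigma_2 \Im(\alpha^j k) \leq 0$ collapse to $\Im(\alpha^j k) \geq 0$, so the same pair of indices $j \in \{0,1,2\}$ works simultaneously for \eqref{tGR_2i}(a) and \eqref{tGR_2i}(b). A direct check of the three sectors shows that the admissible pairs are $\{0,1\}$ in $D^{(1)}$, $\{0,2\}$ in $D^{(3)}$, and $\{1,2\}$ in $D^{(5)}$. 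Each sector thus yields exactly $4$ of the $6$ possible global-relation equations, so $6-4=2$ interface conditions are required to make the system square. Fewer leaves $X$ underdetermined; more generically overdetermines it. The reduction of arbitrary interface conditions to the form~\eqref{ics_pn} via the PDE has already been established in Section~\ref{sec:motivation} and the Notation and Set-Up section.

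Next, I would exploit the block structure of $\mathcal{A}$: rows 1--2 (from the $q^{(1)}$ global relations) are supported only on columns 1--3, rows 3--4 (from the $q^{(2)}$ global relations) only on columns 4--6, and rows 5--6 carry the coefficients $(\beta_{i1},\dots,\beta_{i6})$. By Laplace expansion along rows 1--4, only column subsets containing exactly two indices from $\{1,2,3\}$ and two from $\{4,5,6\}$ contribute nonzero $4 \times 4$ minors, and each such minor factors as a product of a $2 \times 2$ minor from the $q^{(1)}$ block and one from the $q^{(2)}$ block. A direct computation of these six $2 \times 2$ minors shows that each carries a common factor $\Delta := \alpha^{j_1} - \alpha^{j_2} \neq 0$ together with a power of $k$ determined by the columns chosen ($k^3$ for columns $\{1,2\}$, $k^2$ for $\{1,3\}$, and $k$ for $\{2,3\}$), with the standard Laplace signs $(-1)^{\sum I + \sum J}$ weighting the resulting nine terms.

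Grouping these nine contributions by total power of $k$ (and letting $B_{p,q} := \beta_{1p}\beta_{2q} - \beta_{1q}\beta_{2p}$), $\det \mathcal{A}$ turns out to be a polynomial in $k$ supported only on $k^2, k^3, k^4, k^5, k^6$. I expect the coefficient at each degree to simplify to a nonzero sector-dependent constant times, respectively, $B_{3,6}$ at $k^6$; $\sigma_1 B_{2,6} + \sigma_2 B_{3,5}$ at $k^5$; $\sigma_1^2 B_{1,6} + \sigma_1 \sigma_2 B_{2,5} + \sigma_2^2 B_{3,4}$ at $k^4$; $\sigma_1 B_{1,5} + \sigma_2 B_{2,4}$ at $k^3$; and $B_{1,4}$ at $k^2$. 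These five expressions are (up to signs) exactly the combinations in conditions (5), (4), (3), (2), (1). Since a polynomial vanishes identically if and only if every coefficient does, $\det \mathcal{A} \not\equiv 0$ if and only if at least one of (1)--(5) holds. When this is so, $\det \mathcal{A}$ has only finitely many zeros, hence one can choose $R > 0$ sufficiently large that $\mathcal{A}$ is invertible throughout $\overline{D}_R$, as required for the subsequent Fokas analysis.

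The main obstacle will be the bookkeeping: carefully tracking the nine Laplace signs and verifying that the $\alpha$-dependent prefactors coalesce to a single nonzero constant (per $k$-power, per sector) independent of which pair $\{j_1, j_2\}$ is active. The key identity making this possible is $(\alpha^{j_1} + \alpha^{j_2})^2 = \alpha^{j_1 + j_2}$ for distinct $j_1, j_2 \in \{0,1,2\}$, which follows from $1 + \alpha + \alpha^2 = 0$ and ensures that the mixed term $\sigma_1 \sigma_2 B_{2,5}$ in the $k^4$ coefficient has the same $\alpha$-prefactor as $\sigma_1^2 B_{1,6}$ and $\sigma_2^2 B_{3,4}$, producing the clean combination in condition (3).
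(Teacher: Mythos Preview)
Your proposal is correct and follows essentially the same strategy as the paper: count the global relations valid in the relevant sector to see that exactly two interface conditions are needed, then compute $\det\mathcal{A}$ as a polynomial in $k$ and read off conditions~(1)--(5) from its coefficients. The paper's proof is considerably terser---it simply asserts that ``examining $\det(\mathcal{A})=0$ one obtains a polynomial in $k$'' and that the listed conditions are the nonvanishing of its coefficients---whereas you supply the actual mechanism (the block structure of $\mathcal{A}$, the Laplace expansion over column partitions, and the $\alpha$-identity that makes the $k^4$ coefficient collapse). One small over-elaboration: since $\sigma_1>0$ and $\sigma_2<0$ force both contours $\Gamma^{(1)}$ and $\Gamma^{(2)}$ onto $\partial D_R^{(5)}$, the paper works only in $\overline{D}^{(5)}$ (where the admissible pair is $\{1,2\}$), so your analysis of $D^{(1)}$ and $D^{(3)}$ is unnecessary here, though harmless.
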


\begin{proof}[Proof of Proposition~\ref{prop:sigma_pn}]
In the case $\sigma_1>0$ and $\sigma_2<0$, the second integrals of both~\eqref{soln1_2i} and~\eqref{soln2_2i} can be deformed from $\int_{-\infty}^\infty \cdot \ud k$ to $-\int_{\partial D_R^{(5)}}\cdot \ud k$. We rewrite the global relations~\eqref{GR_2i} as
\begin{subequations}\label{pn_GR}
\begin{equation}\label{pn_GR_1_rp2}
	 e^{ik^3t}\hat{q}^{(1)}\left(\frac{\alpha k}{\sigma_1},T\right)-\hat{q}^{(1)}_0\left(\frac{\alpha k}{\sigma_1}\right) ={} \sigma_1^3 g_2(ik^3,T) + i\alpha k\sigma_1^2g_1(ik^3,T) - (\alpha k)^2\sigma_1g_0(ik^3,T),
\end{equation}
\begin{equation}\label{pn_GR_2_rp2}
	e^{ik^3t}\hat{q}^{(2)}\left(\frac{\alpha k}{\sigma_2},T\right)-\hat{q}^{(2)}_0\left(\frac{\alpha k}{\sigma_2}\right) ={} -\sigma_2^3 h_2(ik^3,T) - i\alpha k\sigma_2^2h_1(ik^3,T) + (\alpha k)^2\sigma_2h_0(ik^3,T),
\end{equation}
\begin{equation}\label{pn_GR_1_r}
	 e^{ik^3t}\hat{q}^{(1)}\left(\frac{\alpha^2 k}{\sigma_1},T\right)-\hat{q}^{(1)}_0\left(\frac{\alpha^2 k}{\sigma_1}\right) ={} \sigma_1^3 g_2(ik^3,T) + i\alpha^2 k\sigma_1^2g_1(ik^3,T) - (\alpha^2 k)^2\sigma_1g_0(ik^3,T),
\end{equation}
\begin{equation}\label{pn_GR_2_r}
	e^{ik^3t}\hat{q}^{(2)}\left(\frac{\alpha^{2}k}{\sigma_2},T\right)-\hat{q}^{(2)}_0\left(\frac{\alpha^{2}k}{\sigma_2}\right)  ={}- \sigma_2^3 h_2(ik^3,T) - ik\alpha^{2}\sigma_2^2h_1(ik^3,T) + (\alpha^{2}k)^2\sigma_2h_0(ik^3,T),
\end{equation}
\end{subequations}
which are all valid for $k\in\overline{D}^{(5)}$.  Evaluating~\eqref{ics_pn} for $t={}s$, multiplying by $e^{ik^3s}$, and integrating from $0$ to $t$ one obtains
\begin{equation*}\label{eqn:t_ics_pn}
	h_j(ik^3,T) + \sum_{\ell={}0}^2 \beta_{j+1 ,\ell+1} g_\ell(ik^3,T) ={} \tilde{f}_{j+1}(ik^3,T),\quad j\in\{0,1,2\},
\end{equation*}
where
\begin{equation*}
\tilde{f}_j (\omega,T)={}\int_0^T e^{\omega s} f_j(s)\ud s,~~j\in\{0,1,2\},
\end{equation*}
which is valid for $k \in \overline{D}^{(r)}$ (the closure of $D^{(r)}$). 

In order to solve the full $6\times 6$ system it is clear we must impose two ``interface conditions," since the global relations~\eqref{pn_GR} provide exactly four of the necessary six equations.  If there is one boundary condition relating $q^{(1)}$ and $q^{(2)}$ and their spatial derivatives then one can solve the problem on the left (right) and use the solution and remaining interface conditions to solve the problem on the right (left).  The half-line problem is well posed~\cite{FokasBook, WangFokas} and its solution will not be considered here.  Hence, ``interface conditions" of the type~\eqref{ics_pn} are all we need to consider.  

The above argument only fails if $\det(\mathcal{A})\equiv0$ since all singularities are outside $D_R^{(5)}$.  Examining $\det(\mathcal{A})={}0$ one obtains a polynomial in $k$.  Since we need this to hold for all $k$, we consider the coefficients of each power of $k$.  Requiring at least one coefficient to be nonzero gives the conditions stated in~Proposition~\eqref{prop:sigma_pn}.  
\end{proof}

%%%%%%%%%%%%%%%%%%%%%%%%%%%%%%%%%%%%%%%%%%%%%%%%%%
In the case $\sigma_1>0$ and $\sigma_2>0$, the phase velocity for $x<0$ and $x>0$ is positive.  Thus, information from $q_0^{(1)}(x)$ propagates toward the interface but information from $q_0^{(2)}(x)$ propagates away from the interface as in Figure~\ref{fig:sink_source}.  Hence, we expect that more interface conditions are necessary for a well-posed problem than in the case when $\sigma_1>0$ and $\sigma_2<0$ as in Proposition~\ref{prop:sigma_pn}.  Notice that the case of $\sigma_1<0$ and $\sigma_2<0$ could be considered in this case by letting $x\to-x$.  Hence, we consider only the case where $\sigma_1>0$ and $\sigma_2>0$.

\begin{figure}[htbp]
\begin{center}
\def\svgwidth{4in}
   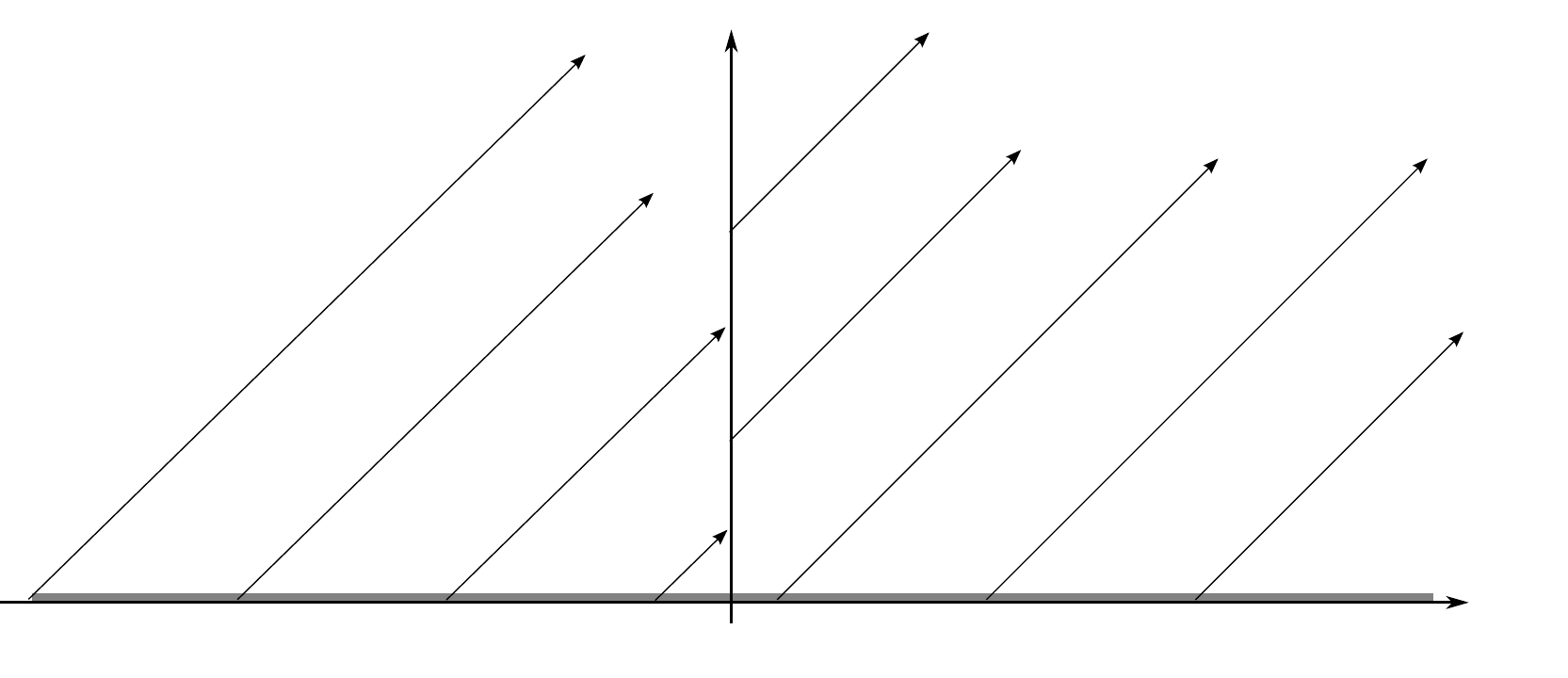 % requires the graphicx package
   \caption{Information from the initial condition $q^{(1)}(x,0)$ propagates toward the interface while information from $q^{(2)}(x,0)$ propagates away from the interface.    \label{fig:sink_source}}
  \end{center}
\end{figure}

\begin{prop}\label{prop:sigma_pp}
Assume $\sigma_1>0$ and $\sigma_2>0$.  The square matrix $\mathcal{A}$ in~\eqref{linsystem} is solvable for $X$ if and only if three interface conditions of the following form are given.  
\begin{subequations}\label{ics_pp}
\begin{align}
\beta_{11}q^{(1)}(0,t)+\beta_{12}q^{(1)}_x(0,t)+\beta_{13}q^{(1)}_{xx}(0,t)+q^{(2)}(0,t)&={}f_1(t),\\
\beta_{21}q^{(1)}(0,t)+\beta_{22}q^{(1)}_x(0,t)+\beta_{23}q^{(1)}_{xx}(0,t)+q^{(2)}_x(0,t)&={}f_2(t),\\
\beta_{31}q^{(1)}(0,t)+\beta_{32}q^{(1)}_x(0,t)+\beta_{33}q^{(1)}_{xx}(0,t)+q^{(2)}_{xx}(0,t)&={}f_3(t).
\end{align}
\end{subequations}

The solution to~\eqref{linsystem} is solvable for $X$ whenever one or more of the following is satisfied:
\begin{enumerate}
\item $\beta_{31}\neq0$,
\item $\sigma_1\beta_{21}+\sigma_2\beta_{32}\neq0$,
\item $\sigma_1^2\beta_{11}+\sigma_1\sigma_2\beta_{22}+\sigma_2^2\beta_{33}\neq0$,
\item $\sigma_1\beta_{12}+\sigma_2\beta_{23}\neq0$,
\item $\beta_{13}\neq0$.
\end{enumerate}

\end{prop}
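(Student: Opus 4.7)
The plan is to mirror the proof of Proposition~\ref{prop:sigma_pn}: identify which of the six global relations~\eqref{tGR_2i} are valid on the regions of $k$-space required for solvability, count the shortfall, supplement by the prescribed interface conditions, and examine when the resulting $6\times 6$ linear system~\eqref{linsystem} has full rank.

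For $\sigma_1>0$ and $\sigma_2>0$, inspection of the validity conditions $\sigma_1\Im(\alpha^j k)\geq 0$ and $\sigma_2\Im(\alpha^j k)\leq 0$ in~\eqref{tGR_2i} shows that in $\overline{D}^{(5)}$ exactly three of the six global relations hold: those for $\hat q^{(2)}$ with $j=0$ and for $\hat q^{(1)}$ with $j=1,2$. An analogous count obtains in each of $\overline{D}^{(1)}$ and $\overline{D}^{(3)}$. Since $\Gamma^{(1)}=\partial D_R^{(5)}$ while $\Gamma^{(2)}=\partial(D_R^{(1)}\cup D_R^{(3)})$ in this sign regime, the $g_j$ must be recovered for $k\in\overline{D}^{(5)}$ and the $h_j$ for $k\in\overline{D}^{(1)}\cup\overline{D}^{(3)}$, and in each case three further equations are needed. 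This confirms the count of three interface conditions. The normalized form~\eqref{ics_pp}, with a $3\times 3$ identity in the $h$-columns, is obtained by elementary row reduction from any three general interface conditions whose $q^{(2)}$-coefficient submatrix has rank $3$, a rank deficiency being excluded by the non-decoupling assumption.

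Multiplying each condition by $e^{ik^3 s}$ and integrating over $s\in(0,T)$ gives
\begin{equation*}
h_j(ik^3,T)+\sum_{\ell=0}^{2}\beta_{j+1,\ell+1}\,g_\ell(ik^3,T)={}\tilde f_{j+1}(ik^3,T),\qquad j\in\{0,1,2\},
\end{equation*}
with $\tilde f_\ell$ the time-transformed data. Together with the three valid global relations in $\overline{D}^{(5)}$, these form the rows of $\mathcal A$ in the ordering $X=(g_0,g_1,g_2,h_0,h_1,h_2)^\top$. Using the identity block in the interface-condition rows to eliminate $h_0,h_1,h_2$ from the single $h$-dominant global relation reduces $\det\mathcal A$ (up to sign) to the $3\times 3$ determinant
\begin{equation*}
\det\begin{pmatrix} -(\alpha k)^2\sigma_1 & i\alpha k\sigma_1^2 & \sigma_1^3 \\ -(\alpha^2 k)^2\sigma_1 & i\alpha^2 k\sigma_1^2 & \sigma_1^3 \\ c_{31} & c_{32} & c_{33} \end{pmatrix},
\end{equation*}
where $c_{3\ell}={}-\sigma_2 k^2\beta_{1\ell}+i\sigma_2^2 k\beta_{2\ell}+\sigma_2^3\beta_{3\ell}$ for $\ell=1,2,3$.

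The first two rows of this matrix contribute a Vandermonde-like common factor $\alpha-\alpha^2=i\sqrt{3}$, and cofactor expansion along the third row yields a polynomial of degree $5$ in $k$ whose coefficients, up to nonzero scalar multiples, are exactly $\beta_{31}$, $\sigma_1\beta_{21}+\sigma_2\beta_{32}$, $\sigma_1^2\beta_{11}+\sigma_1\sigma_2\beta_{22}+\sigma_2^2\beta_{33}$, $\sigma_1\beta_{12}+\sigma_2\beta_{23}$, and $\beta_{13}$. Since the polynomial is not identically zero if and only if at least one of its coefficients is nonzero, this recovers the five conditions in the statement. The analogous systems over $\overline{D}^{(1)}$ and $\overline{D}^{(3)}$ arise from the $\overline{D}^{(5)}$ system by the substitutions $k\to\alpha^2 k$ and $k\to\alpha k$, which preserve $ik^3$ and merely permute the cube roots of unity, so the same five conditions guarantee solvability in every region. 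As in the preceding proof, $R>0$ can be chosen large enough that $\overline{D}_R$ avoids the finitely many zeros of $\det\mathcal A$. The main technical obstacle is the determinant expansion itself: with nine unknown $\beta_{ij}$ spread across the third row and cube roots of unity multiplying the first two, many terms must be organized carefully by power of $k$ so that the five coefficient structures fall out in the clean form stated.
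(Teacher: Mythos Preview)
Your proposal is correct and follows essentially the same approach as the paper's proof: count the valid global relations in each region, supplement with three interface conditions, and extract the five nonvanishing-coefficient criteria from the polynomial $\det\mathcal{A}$. Your block reduction to a $3\times 3$ determinant and your observation that the systems over $\overline{D}^{(1)}$ and $\overline{D}^{(3)}$ are obtained from the $\overline{D}^{(5)}$ system by the $ik^3$-preserving substitutions $k\mapsto\alpha k,\alpha^2 k$ make the computation more transparent than the paper's bare assertion that ``examining $\det(\mathcal{A})=0$ one obtains a polynomial in $k$,'' but the underlying strategy is identical.
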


\vspace{.4in}
{\bf Remark.}
It may be possible to rewrite the interface conditions so that one is a boundary condition for $q^{(2)}$ and still have an interface problem. However, a single boundary condition for $q^{(1)}$ or a pair of boundary conditions for $q^{(2)}$ implies that the problem separates into a pair of BVPs.
\vspace{.4in}

\begin{proof}[Proof of Proposition~\ref{prop:sigma_pp}]
In the case $\sigma_1>0$ and $\sigma_2>0$, the second integrals of~\eqref{soln1_2i} can be deformed from $\int_{-\infty}^\infty \cdot \ud k$ to $-\int_{\partial D_R^{(5)}}\cdot \ud k$.  The second integral of~\eqref{soln2_2i} can be deformed from $\int_{-\infty}^\infty \cdot \ud k$ to $\int_{\partial D_R^{(1)}}\cdot \ud k+\int_{\partial D_R^{(3)}}\cdot \ud k$.  We rewrite the global relations for each $r\in\{1,3,5\}$ as
\begin{subequations}\label{pp_GR_r}
\begin{equation}\label{pp_GR_1_r}
	 e^{ik^3t}\hat{q}^{(1)}\left(\frac{\alpha^{r}k}{\sigma_1},T\right)-\hat{q}^{(1)}_0\left(\frac{\alpha^{r}k}{\sigma_1}\right) ={} \sigma_1^3 g_2(ik^3,T) + ik\alpha^{r}\sigma_1^2g_1(ik^3,T) - (k\alpha^{r})^2\sigma_1g_0(ik^3,T),
\end{equation}
\begin{equation}\label{pp_GR_1_rp2}
\begin{split}
	e^{ik^3t}\hat{q}^{(1)}\left(\frac{\alpha^{r+2}k}{\sigma_1},T\right)-{}&\hat{q}^{(1)}_0\left(\frac{\alpha^{r+2}k}{\sigma_1}\right)  ={}\\
	& \sigma_1^3 g_2(ik^3,T) + ik\alpha^{r+2}\sigma_1^2g_1(ik^3,T) - (k\alpha^{r+2})^2\sigma_1g_0(ik^3,T),
	\end{split}
\end{equation}
\begin{equation}\label{pp_GR_2_rp1}
\begin{split}
	e^{ik^3t}\hat{q}^{(2)}\left(\frac{\alpha^{r+1}k}{\sigma_2},T\right)-{}&\hat{q}^{(2)}_0\left(\frac{\alpha^{r+1}k}{\sigma_2}\right)  ={}\\
	&- \sigma_2^3 h_2(ik^3,T) - ik\alpha^{r+1}\sigma_2^2h_1(ik^3,T) + (k\alpha^{r+1})^2\sigma_2h_0(ik^3,T),
	\end{split}
\end{equation}
\end{subequations}
which are all valid for $k\in\overline{D}^{(r)}$.  Evaluating~\eqref{ics_pp} for $t=s$, multiplying by $e^{ik^3s}$, and integrating from $0$ to $t$ one obtains
\begin{equation*}\label{eqn:t_ics_pp}
	h_j(ik^3,T) + \sum_{\ell=0}^2 \beta_{j+1 ,\ell+1} g_\ell(ik^3,T) =\tilde{f}_{j+1}(ik^3,T),\quad j\in\{0,1,2\},
\end{equation*}
where
\begin{equation*}
\tilde{f}_j (\omega,T)={}\int_0^T e^{\omega s} f_j(s)\ud s,~~j\in\{0,1,2\},
\end{equation*}
which is valid for $k \in \overline{D}^{(r)}$. 

In order to solve the full $6\times 6$ system it is clear we must impose three ``interface conditions", since~\eqref{pp_GR_r} provides just three equations.  We must now examine the cases where one or more of these conditions decouples into a boundary condition on either $q^{(1)}$ or $q^{(2)}$.  If there is one boundary condition relating $q^{(1)}$ and its spatial derivatives, then one can solve the problem on the left and use the solution and remaining interface conditions to solve the problem on the right.  Solving the half-line problem is well posed~\cite{FokasBook, WangFokas} and is not considered here.  Hence, ``interface conditions" of the type~\eqref{ics_pp} are all we need to consider.  

Examining $\det(\mathcal{A})={}0$ in this case, one obtains a polynomial in $k$.  Since we need this to hold for all $k$, we consider the coefficients of each power of $k$.  Since we want conditions on $\det(\mathcal{A})\neq0$ we need at least one of the coefficients to be nonzero.  This gives the conditions stated in Proposition~\eqref{prop:sigma_pp}.

\end{proof}

%%%%%%%%%%%%%%%%%%%%%%%%%%%%%%%%%%%%%%%%%%%%%%%%%%
In the case $\sigma_1<0$ and $\sigma_2>0$, the phase velocity for $x<0$ is negative and the phase velocity for $x>0$ is positive.  Thus, information from the initial conditions propagates away from the interface as in Figure~\ref{fig:source_source}.  Hence, we expect that more interface conditions are necessary for a well-posed problem than in the previous cases.

\begin{figure}[htbp]
\begin{center}
\def\svgwidth{4in}
   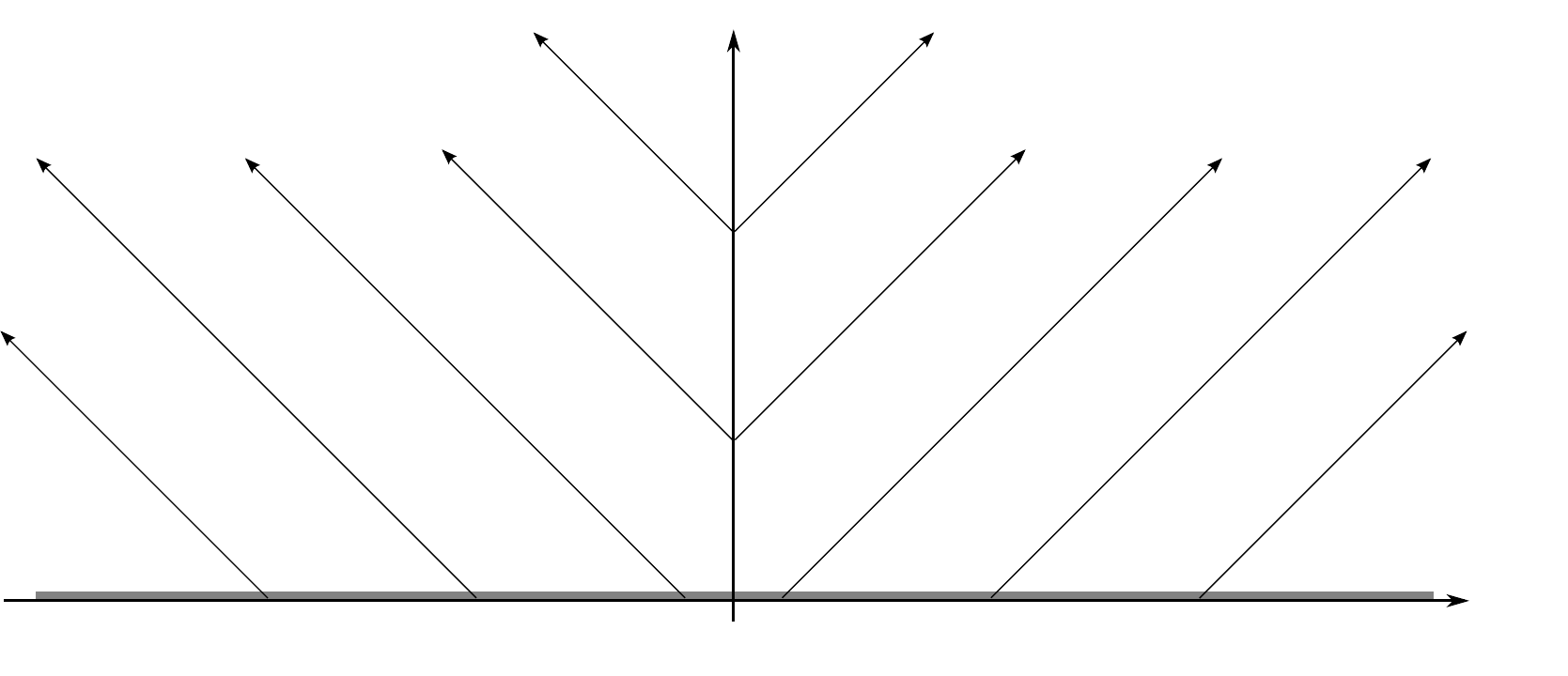 % requires the graphicx package
   \caption{Information from the initial conditions $q^{(1)}(x,0)$ and $q^{(2)}(x,0)$ propagates away from the interface.    \label{fig:source_source}}
  \end{center}
\end{figure}

\begin{prop}\label{prop:sigma_np}
Assume $\sigma_1<0$ and $\sigma_2>0$.  Equation~\eqref{linsystem} is full rank if and only if four interface conditions are given.  These conditions must be of the form
\begin{subequations}\label{ics_np}
\begin{align}
q^{(1)}(0,t)+\beta_{14}q^{(2)}(0,t)+\beta_{15}q^{(2)}_x(0,t)+\beta_{16}q^{(2)}_{xx}(0,t)&={}f_1(t),\label{ics_np_ic1}\\
q^{(1)}_x(0,t)+\beta_{24}q^{(2)}(0,t)+\beta_{25}q^{(2)}_x(0,t)+\beta_{26}q^{(2)}_{xx}(0,t)&={}f_2(t),\label{ics_np_ic2}\\
q^{(1)}_{xx}(0,t)+\beta_{34}q^{(2)}(0,t)+\beta_{35}q^{(2)}_x(0,t)+\beta_{36}q^{(2)}_{xx}(0,t)&={}f_3(t),\label{ics_np_ic3}\\
\beta_{44}q^{(2)}(0,t)+\beta_{45}q^{(2)}_x(0,t)+\beta_{46}q^{(2)}_{xx}(0,t)&={}f_4(t).\label{ics_np_bc}
\end{align}
\end{subequations}

The solution to~\eqref{linsystem} is full rank whenever one or more of the following is satisfied:
\begin{enumerate}
\item $\beta_{35}\beta_{44}\neq\beta_{34}\beta_{45}$,
\item $\sigma_1(\beta_{34}\beta_{46}-\beta_{36}\beta_{44})\neq\sigma_2(\beta_{24}\beta_{45}-\beta_{25}\beta_{44})$,
\item $\sigma_1^2(\beta_{35}\beta_{46}-\beta_{36}\beta_{45})+\sigma_1\sigma_2(\beta_{26}\beta_{44}-\beta_{24}\beta_{46})+\sigma_2^2(\beta_{14}\beta_{45}-\beta_{15}\beta_{44})\neq0$,
\item $\sigma_1(\beta_{26}\beta_{45}-\beta_{25}\beta_{46})\neq\sigma_2(\beta_{16}\beta_{44}-\beta_{14}\beta_{46})$,
\item $\beta_{16}\beta_{45}\neq\beta_{15}\beta_{46}$.
\end{enumerate}

\end{prop}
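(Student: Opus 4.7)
The plan is to follow the template established by the proofs of Propositions \ref{prop:sigma_pn} and \ref{prop:sigma_pp}, adapted to the configuration in which both initial conditions radiate information \emph{away} from the interface. First, I would determine which contour deformations are permitted in (\ref{soln1_2i}) and (\ref{soln2_2i}) under the sign assumptions $\sigma_1<0,\sigma_2>0$. For $x<0$ with $\sigma_1<0$, the exponential $e^{ikx/\sigma_1}$ decays in the upper half $k$-plane, so the second integral in (\ref{soln1_2i}) deforms from $\mathbb{R}$ to $\int_{\partial D_R^{(1)}}\cdot\,\mathrm{d}k+\int_{\partial D_R^{(3)}}\cdot\,\mathrm{d}k$; similarly for $x>0$ with $\sigma_2>0$, the second integral in (\ref{soln2_2i}) deforms to the same union of contours. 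Hence both solutions require the spectral functions in the closures of $D^{(1)}$ and $D^{(3)}$.

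Next, I would enumerate, among the six transformed global relations (\ref{tGR_2i}), which are actually valid for $k\in\overline{D}^{(r)}$ with $r\in\{1,3\}$. Because $\sigma_1<0$, (\ref{tGR_2i}a) is valid when $\alpha^jk$ lies in the closed lower half plane, and because $\sigma_2>0$, (\ref{tGR_2i}b) is valid under the same condition. A rotation count modulo $2\pi/3$ shows that for each $r\in\{1,3\}$ there is exactly one $j\in\{0,1,2\}$ with $\alpha^jk\in\overline{D}^{(5)}$, giving exactly two valid global relations (one per side) per sector. Consequently four of the six equations comprising (\ref{linsystem}) must be supplied by interface conditions, agreeing with the count in the proposition.

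I would then justify the reduced form (\ref{ics_np}) by appealing to the same elementary linear algebra and PDE-elimination argument used in the Notation and Set-Up section: any derivative of order $\geq 3$ can be replaced using (\ref{LKdV_2i}) as in (\ref{3deriv_const}), and Gaussian elimination on the $4\times 6$ coefficient block of the four interface conditions puts them in reduced row-echelon form. The pivot pattern displayed in (\ref{ics_np_ic1})--(\ref{ics_np_bc}) — with $q^{(1)}$, $q^{(1)}_x$, $q^{(1)}_{xx}$ pivots in the first three rows and a pure-$q^{(2)}$ row at the bottom — is the unique pattern avoiding the two excluded degeneracies: either a pivot-free $q^{(1)}$-side (which would decouple $q^{(1)}$ into an overdetermined half-line problem with $\sigma_1<0$) or two pure-$q^{(2)}$ rows (which would fully decouple the problem into a BVP for $q^{(2)}$ and an IVP for $q^{(1)}$, a situation covered by \cite{FokasBook, WangFokas}).

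Finally, to obtain criteria 1--5, I would assemble the $6\times 6$ matrix $\mathcal{A}$ whose six rows are the two valid global relations in each of $\overline{D}^{(1)}$ and $\overline{D}^{(3)}$ (evaluated after Laplace-transforming the interface equations as in the earlier proofs), compute $\det(\mathcal{A})$ as a polynomial in $k$, and read off the five coefficients as the listed $2\times 2$ minors of the $\beta$-array scaled by the appropriate powers of $\sigma_1,\sigma_2$. The proposition is then equivalent to the assertion that at least one of these polynomial coefficients is nonzero. The main obstacle will be the determinant expansion itself: the matrix entries carry many combined factors of the form $\sigma_j^p\alpha^{jq}k^r$, and the coefficients of $\det(\mathcal{A})$ simplify only after invoking $\alpha^3=1$ and $1+\alpha+\alpha^2=0$ repeatedly. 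I would organize the expansion by splitting $\mathcal{A}$ into the two $3$-column blocks corresponding to $(g_0,g_1,g_2)$ and $(h_0,h_1,h_2)$, expand along the four interface rows, and track the five surviving monomials in $k$ to recover precisely the expressions in 1--5.
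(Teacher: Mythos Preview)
Your proposal is correct and follows essentially the same approach as the paper: deform both contours to $\partial D_R^{(1)}\cup\partial D_R^{(3)}$, observe that only two global relations are valid in each sector so four interface conditions are required, reduce those to the form~\eqref{ics_np} by elementary linear algebra after excluding the decoupling cases, and then read off conditions 1--5 as the coefficients of the polynomial $\det(\mathcal{A})$ in $k$. One small correction to your wording: the degenerate case where the $q^{(1)}$-block has rank $<3$ yields at least two pure-$q^{(2)}$ rows, which makes the $q^{(2)}$ half-line problem well posed (two boundary conditions, $\sigma_2>0$) and thereby decouples the system---it is not that $q^{(1)}$ becomes overdetermined.
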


\vspace{.4in}
{\bf Remark.}
As four interface conditions are required, it must be possible to write (at least) two as boundary conditions. If there are two boundary conditions for either $q^{(1)}$ or $q^{(2)}$, then the problem separates into a pair of BVP, so we only consider the case where there is precisely one boundary condition for each of $q^{(1)}$ and $q^{(2)}$. However, for the purposes of stating the result, it is more convenient to write the conditions in the form~\eqref{ics_np}.
\vspace{.4in}

\begin{proof}[Proof of Proposition~\ref{prop:sigma_np}]
In the case $\sigma_1<0$ and $\sigma_2>0$, the second integrals of both~\eqref{soln1_2i} and~\eqref{soln2_2i} can be deformed from $\int_{-\infty}^\infty \cdot \ud k$ to $\int_{\partial D_R^{(1)}}\cdot \ud k+\int_{\partial D_R^{(3)}}\cdot \ud k$.  The appropriate global relations can be rewritten for $r\in\{1,3\}$ as
\begin{subequations}\label{np_GR_r}
\begin{equation}\label{np_GR_1_rp1}
\begin{split}
	e^{ik^3t}\hat{q}^{(1)}\left(\frac{\alpha^{r+1}k}{\sigma_1},T\right) -{}&\hat{q}^{(1)}_0\left(\frac{\alpha^{r+1}k}{\sigma_1}\right)  ={} \\
	&\sigma_1^3 g_2(ik^3,T) + ik\alpha^{r+1}\sigma_1^2g_1(ik^3,T) - (k\alpha^{r+1})^2\sigma_1g_0(ik^3,T),
	\end{split}
\end{equation}
\begin{equation}\label{np_GR_2_rp1}
\begin{split}
	e^{ik^3t}\hat{q}^{(2)}\left(\frac{\alpha^{r+1}k}{\sigma_2},T\right)-{}&\hat{q}^{(2)}_0\left(\frac{\alpha^{r+1}k}{\sigma_2}\right)  ={}\\
	&- \sigma_2^3 h_2(ik^3,T) - ik\alpha^{r+1}\sigma_2^2h_1(ik^3,T)+(k\alpha^{r+1})^2\sigma_2h_0(ik^3,T),
	\end{split}
\end{equation}
\end{subequations}
which are all valid for $k\in\overline{D}^{(r)}$.  Evaluating~\eqref{ics_np} for $t={}s$, multiplying by $e^{ik^3s}$, and integrating from $0$ to $t$ one obtains
%\begin{subequations}\label{t_ics_np}
\begin{align*}
g_0(ik^3,T)+\beta_{15}h_1(ik^3,T)+\beta_{16}h_2(ik^3,T)&={}\tilde{f}_1(ik^3,T),\\
g_1(ik^3,T)+\beta_{25}h_1(ik^3,T)+\beta_{26}h_2(ik^3,T)&={}\tilde{f}_2(ik^3,T),\\
g_2(ik^3,T)+\beta_{35}h_1(ik^3,T)+\beta_{36}h_2(ik^3,T)&={}\tilde{f}_3(ik^3,T),\\
h_0(ik^3,T)+\beta_{45}h_1(ik^3,T)+\beta_{46}h_2(ik^3,T)&={}\tilde{f}_4(ik^3,T),
\end{align*}
%\end{subequations}
where
\begin{equation*}
\tilde{f}_j (\omega,T)={}\int_0^T e^{\omega s} f_j(s)\ud s,~~j\in\{1,2,3,4\},
\end{equation*}
which is valid for $k \in \overline{D}^{(r)}$. 

In order to solve the full $6\times 6$ system~\eqref{linsystem} we must impose four ``interface conditions,"  since~\eqref{np_GR_r} gives only two equations. We need to examine the cases where one or more of these conditions decouples into a boundary condition on either $q^{(1)}$ or $q^{(2)}$.  Using elementary linear algebra it is clear that at least one of these conditions must be a boundary condition.  If there are two boundary conditions relating $q^{(1)}$ ($q^{(2)}$) and its spatial derivatives then one can solve the problem on the left (right) and use the solution and remaining interface conditions to solve the problem on the right (left).  Solving the half-line problem is well posed~\cite{FokasBook, WangFokas} and will not be considered here. 

There must be precisely one boundary condition of the form~\eqref{ics_np_bc}.  It is then elementary linear algebra to see that the remaining three conditions can be written as interface conditions in the form~\eqref{ics_np_ic1}-\eqref{ics_np_ic3}.

Examining $\det(\mathcal{A})={}0$ one obtains a polynomial in $k$.  We need this condition to hold for all $k$ and we consider the coefficients of each power of $k$.  Since we want conditions on $\det(\mathcal{A})\neq0$ we need at least one of the coefficients to be nonzero.  This gives the conditions stated in Proposition~\eqref{prop:sigma_np}.
\end{proof}

\begin{prop}\label{prop:soln}
Assume $\mathcal{A}$ in~\eqref{linsystem} is full rank.  A solution to~\eqref{LKdV_2i} is given by~\eqref{dsolns_2i_T} where $g_j(ik^3,T)$ and $h_j(ik^3,T)$ for $j={}0,1,2$ are the solution to the linear system $\mathcal{A}X={}Y$ where $\mathcal{A}$, $X$, and $Y$ are given in~\eqref{LKdV_X} and the surrounding paragraph.
\end{prop}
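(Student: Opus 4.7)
The plan is to verify that the formula~\eqref{dsolns_2i_T}, with spectral data $X^{*} := \mathcal{A}^{-1}Y$ obtained from the reduced linear system $\mathcal{A}X = Y$, defines functions $q^{(j)}$ satisfying (i) the LKdV equations in each subdomain, (ii) the initial conditions, and (iii) the prescribed interface conditions. The full-rank hypothesis, together with the observation that $\det\mathcal{A}$ is a polynomial in $k$, permits one to choose $R>0$ large enough that $\overline{D}_R$ contains no zeros of $\det\mathcal{A}$, so that $\mathcal{A}^{-1}$ is analytic and polynomially bounded in $ik^3$ throughout $\overline{D}_R^{(r)}$ for each relevant $r\in\{1,3,5\}$.

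For step (i) I would simply differentiate under the integral sign. Both exponentials $e^{ikx-\omega_j t}$ and $e^{ikx/\sigma_j - ik^3 t}$ satisfy $(\partial_t - \sigma_j^3 \partial_x^3)(\cdot) = 0$, since $\omega_j = i\sigma_j^3 k^3$ and $\sigma_j^3(ik/\sigma_j)^3 = -ik^3$. For step (ii) it is cleaner to work with the equivalent form~\eqref{dsolns_2i_t} rather than~\eqref{dsolns_2i_T}: at $t=0$ the spectral functions $g_n(ik^3,0)$ and $h_n(ik^3,0)$ vanish, so the second integrals are zero, while the first integrals reduce to $q_0^{(j)}(x)$ on the appropriate half-line by Fourier inversion and, on the opposite half-line, vanish by Paley--Wiener-type contour closure using the analyticity of $\hat{q}_0^{(j)}$ in the correct half-plane.

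The main obstacle is step (iii). By construction, each row of the system $\mathcal{A}X = Y + e^{ik^3T}\mathcal{Y}$ is either a global relation~\eqref{GR_2i} (valid on $\overline{D}_R^{(r)}$) or a spectral transform of one of the interface conditions from Propositions~\ref{prop:sigma_pn}--\ref{prop:sigma_np} obtained by multiplying by $e^{ik^3s}$ and integrating in $s$. Hence the \emph{true} spectral data obey $X = X^{*} + \mathcal{A}^{-1}e^{ik^3T}\mathcal{Y}$, and the proposition reduces to the claim that the discrepancy $\mathcal{A}^{-1}e^{ik^3T}\mathcal{Y}$ contributes nothing when substituted into the contour integrals of~\eqref{dsolns_2i_T} along $\Gamma^{(j)}$.

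I plan to establish this vanishing by a standard analyticity-and-Jordan's-lemma argument. The entries of $\mathcal{Y}$ are the quantities $\hat{q}^{(j)}(\alpha^r k/\sigma_j,T)$ for suitable $r$; as Fourier transforms of Schwartz functions supported on the correct half-line, they are analytic and rapidly decaying in the sector of $\mathbb{C}$ enclosed between $\Gamma^{(j)}$ and the real axis. The factor $e^{ik^3(T-t)}$ arising from the product $e^{ik^3T}\cdot e^{-ik^3 t}$ satisfies $\Re(ik^3(T-t))<0$ throughout $D$ because $T>t$, providing exponential decay as $|k|\to\infty$ in every sector of $D$. Combined with the polynomial boundedness of $\mathcal{A}^{-1}$ on $\overline{D}_R$, Jordan's lemma applied in each sector allows the contour $\Gamma^{(j)}$ to be closed at infinity within $D$ and forces the discrepancy integrals to vanish. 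This is what renders the solution formula well-defined and consistent with the prescribed interface conditions, and completes the verification.
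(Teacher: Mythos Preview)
Your proposal is correct and takes essentially the same approach as the paper: both reduce the claim to showing that the discrepancy term $\mathcal{A}^{-1}e^{ik^3T}\mathcal{Y}$ contributes nothing to the contour integrals over $\Gamma^{(j)}$, and both establish this via analyticity plus Jordan's lemma (the paper carries it out through Cramer's rule and a case-by-case analysis on the signs of $\sigma_1,\sigma_2$, invoking Riemann--Lebesgue for the decay of $\hat{q}^{(j)}(\cdot,T)$, while you invoke Schwartz decay together with the exponential factor $e^{ik^3(T-t)}$). One minor correction: the region in which the entries of $\mathcal{Y}$ must be analytic and decaying is the sector $D_R^{(r)}$ enclosed by $\Gamma^{(j)}$ (so that the contour can be closed at infinity \emph{within} that sector), not the strip between $\Gamma^{(j)}$ and the real axis---the latter was used earlier only for the initial deformation from $\mathbb{R}$ to $\Gamma^{(j)}$.
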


\begin{proof}[Proof of Proposition~\ref{prop:soln}]
Consider $\mathcal{A}_{j}$, which is the matrix $\mathcal{A}$ with the $j^\textrm{th}$ column replaced by $e^{ik^3T}\mathcal{Y}$.  We  solve $\mathcal{A}X=e^{ik^3T}\mathcal{Y}$ using Cramer's Rule~\cite{Cramer}.  If we show that the contribution to the solution from $e^{ik^3T}\mathcal{Y}$ is zero, then we have proved the proposition.  The terms we are concerned with from~\eqref{dsolns_2i_T} are
$$\frac{1}{2\pi}\int_{\Gamma^{(1)}} e^{i\frac{k}{\sigma_1}x-ik^3t} \left(\sigma_1^2g_2(ik^3,T)+ik\sigma_1g_1(ik^3,T)-k^2g_0(ik^3,T)\right)\ud k.$$
and
$$\frac{1}{2\pi}\int_{\Gamma^{(2)}} e^{i\frac{k}{\sigma_2}x-ik^3t} \left(\sigma_2^2h_2(ik^3,T)+ik\sigma_2h_1(ik^3,T)-k^2h_0(ik^3,T)\right)\ud k.$$
Using Cramer's Rule and our factorization these become
\begin{subequations}\label{crule}
\begin{equation}\label{crule1}
\frac{1}{2\pi}\int_{\Gamma^{(1)}} e^{i\frac{k}{\sigma_1}x+ik^3(T-t)} \left(\sigma_1^2 \frac{\det(\mathcal{A}_{3})}{\det(\mathcal{A})}+ik\sigma_1\frac{\det(\mathcal{A}_{2})}{\det(\mathcal{A})}-k^2\frac{\det(\mathcal{A}_{1})}{\det(\mathcal{A})}\right)\ud k.
\end{equation}
\begin{equation}\label{crule2}
\frac{1}{2\pi}\int_{\Gamma^{(2)}} e^{i\frac{k}{\sigma_2}x+ik^3(T-t)} \left(\sigma_2^2 \frac{\det(\mathcal{A}_{6})}{\det(\mathcal{A})}+ik\sigma_2\frac{\det(\mathcal{A}_{5})}{\det(\mathcal{A})}-k^2\frac{\det(\mathcal{A}_{4})}{\det(\mathcal{A})}\right)\ud k.
\end{equation}
\end{subequations}

We would like to show these integrand terms are analytic and decay for large $k$ inside the domains around which they are integrated.  Note that $\det(\mathcal{A})\neq0$ since~\eqref{linsystem} is full rank.

\begin{description}
\item[Case 1. $\sigma_1<0, \sigma_2>0$:]

For $\sigma_1<0$ and $\sigma_2>0$, $\Gamma^{(1)}=\Gamma^{(2)}=\partial D_R^{(1)}\cup\partial D_R^{(3)}$.  Using the form of $\mathcal{Y}$ in this case each term of the integrand in~\eqref{crule1} is of the form
\begin{align*}
%\frac{1}{2\pi}\int_{\Gamma^{(1)}} e^{i\frac{k x}{\sigma_1}+ik^3(T-t)}\frac{\det(\mathcal{A}_{j})}{\det(\mathcal{A})}\ud k=
{}&\frac{1}{2\pi}\int_{\Gamma^{(1)}} e^{i\frac{k x}{\sigma_1}+ik^3(T-t)}\left(c_1(k) \hat{q}^{(1)}\left(\frac{\alpha^{r+1}k}{\sigma_1},T\right)+c_2(k) \hat{q}^{(2)}\left(\frac{\alpha^{r+1}k}{\sigma_2},T\right) \right)\ud k\\
={}&\frac{1}{2\pi} \int_{\Gamma^{(1)}} e^{ik^3(T-t)+\frac{ikx}{\sigma_1}} c_1(k)\left( \int_{-\infty}^0 q^{(1)}(y,T)e^{\frac{-ik \alpha^{r+1} y}{\sigma_1}}\right)\ud k\ud y\\
&+\frac{1}{2\pi}\int_{\Gamma^{(1)}} e^{ik^3(T-t)+\frac{ikx}{\sigma_1}} c_2(k) \left(\int_{0}^\infty q^{(2)}(y,T)e^{\frac{-ik\alpha^{r+1} y}{\sigma_2}}\right)\ud k\ud y,
\end{align*}
where $r\in\{1,3\}$ depending on which region one is integrating around ($D_R^{(1)}$ or $D_R^{(3)}$) and $c_1(k)$ and $c_2(k)$ involve the constants $\beta_{j,\ell}$ which are $\mathcal{O}(1)$ as $k\to\infty$ from within $\Gamma^{(1)}$ and analytic for all $k\in D_R^{(r)}$.  For $k\to\infty$ with $k\in \overline{D}^{(r)}$ the expression inside the parenthesis decays by the Riemann-Lebesgue Lemma.  Thus, by Jordan's Lemma, these integrals along a closed, bounded curve in the complex $k$ plane vanish for $x<0$.  In particular we consider the closed curves  $\mathcal{L}^{(1)}={}\mathcal{L}_{D^{(1)}}\cup\mathcal{L}_{C^{(1)}}$ and $\mathcal{L}^{(3)}=\mathcal{L}_{D^{(3)}}\cup\mathcal{L}_{C^{(3)}}$ where $\mathcal{L}_{D^{(j)}}={}\partial D_R^{(j)} \cap \{k: |k|<C\}$ and $\mathcal{L}_{C^{(j)}}={}\{k\in D_R^{(j)}: |k|={}C\}$, see Figure~\ref{fig:LKdV_DR_close}.

\begin{figure}[tb]
   \centering
\def\svgwidth{3.5in}
   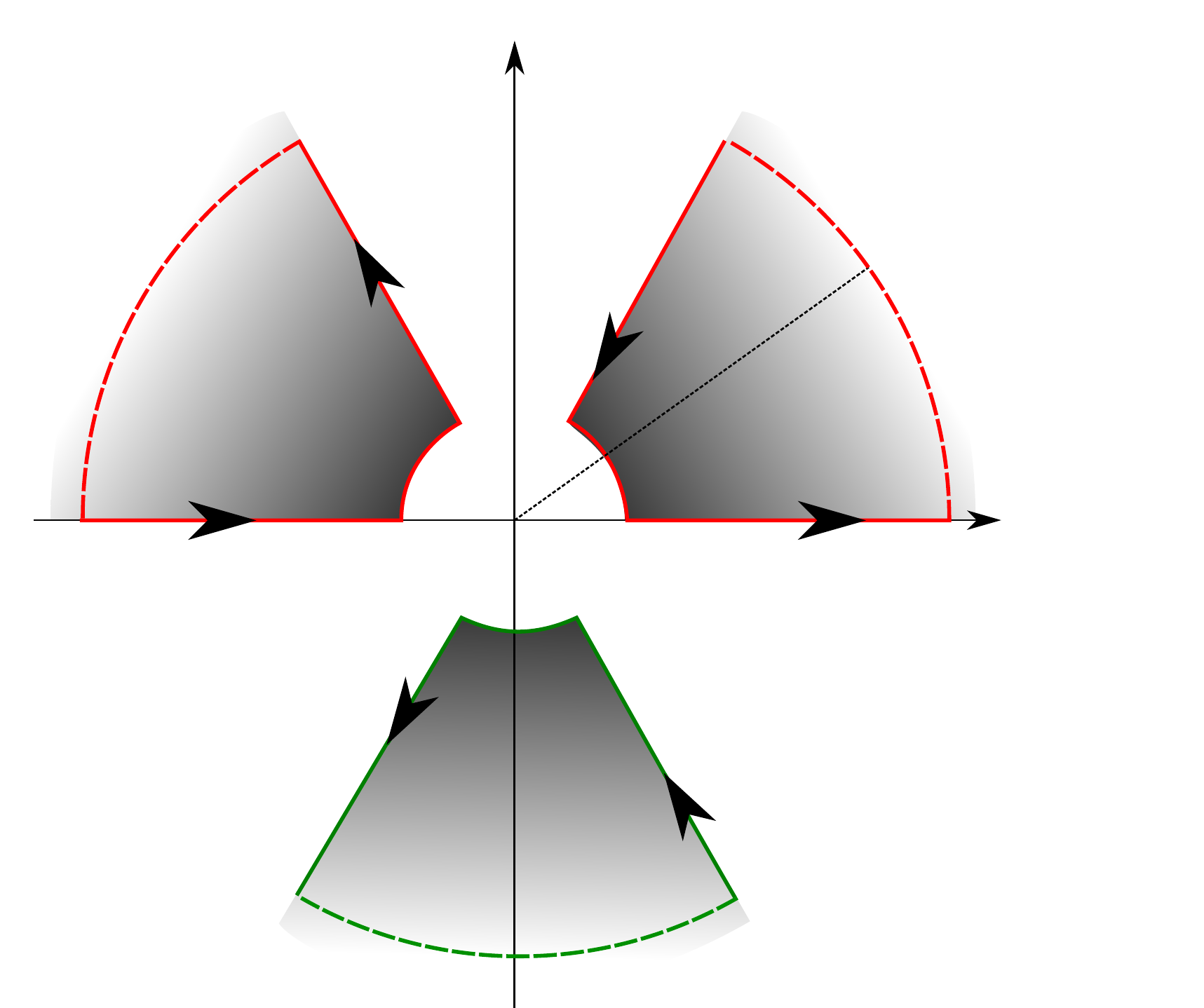 
   \caption[The contours $\mathcal{L}^{(1)}$, $\mathcal{L}^{(3)}$, and $\mathcal{L}^{(5)}$ for the linear KdV equation.]{The contours $\mathcal{L}_{D^{(1)}}$ and $\mathcal{L}_{D^{(3)}}$ are shown as red solid lines and the contours $\mathcal{L}_{C^{(1)}}$ and $\mathcal{L}_{C^{(3)}}$ are shown as red dashed lines.  The contour $\mathcal{L}_{D^{(5)}}$ is shown as a green solid line and the contour $\mathcal{L}_{C^{(5)}}$ is shown as a green dashed line.  An application of Cauchy's Integral Theorem using these contours allows elimination of the contribution of $\hat{q}^{(1)}(\cdot,t)$ and  $\hat{q}^{(2)}(\cdot,t)$ from the integral expressions~\eqref{crule}.   \label{fig:LKdV_DR_close}}
\end{figure}

Since the integrals along $\mathcal{L}_{C^{(1)}}$ and $\mathcal{L}_{C^{(3)}}$ vanish for large $C$, the integrals must vanish since the contour $\mathcal{L}_{D^{(1)}}$  becomes $\partial D_R^{(1)}$ as $C\to\infty$.  The same argument holds for $\mathcal{L}^{(3)}$ and $\partial D_R^{(3)}$. The uniform decay of the expressions in parentheses for large $k$ is exactly the condition required for the integral to vanish using Jordan's Lemma.  Hence,~\eqref{crule1} is zero.  A similar argument holds for~\eqref{crule2}.

\item[Case 2.  $\sigma_1>0, \sigma_2>0$:]

For $\sigma_1>0$ and $\sigma_2>0$, $\Gamma^{(1)}=\partial D_R^{(5)}$ and $\Gamma^{(2)}={}\partial D_R^{(1)}+\partial D_R^{(3)}$.  Using the form of $\mathcal{Y}$ in this case each piece of the integrand in~\eqref{crule1} is of the form
\begin{align*}
%\frac{1}{2\pi}\int_{\Gamma^{(1)}}e^{i\frac{k x}{\sigma_1}+ik^3(T-t)}\frac{\det(\mathcal{A}_{j})}{\det(\mathcal{A})}\ud k=
{}&\frac{1}{2\pi}\int_{\Gamma^{(1)}}e^{i\frac{k x}{\sigma_1}+ik^3(T-t)}\left(c_1(k) \hat{q}^{(1)}\left(\frac{\alpha^{2}k}{\sigma_1},T\right)+c_2(k) \hat{q}^{(1)}\left(\frac{\alpha k}{\sigma_1},T\right)\right)\ud k\\
&+\frac{1}{2\pi}\int_{\Gamma^{(1)}}e^{i\frac{k x}{\sigma_1}+ik^3(T-t)}c_3(k) \hat{q}^{(2)}\left(\frac{k}{\sigma_2},T\right)\ud k\\
={}&\frac{1}{2\pi}\int_{\Gamma^{(1)}} e^{ik^3(T-t)+\frac{ikx}{\sigma_1}}c_1(k)\left(\int_{-\infty}^0q^{(1)}(y,T) e^{\frac{-ik\alpha^{2}y}{\sigma_1}}\right)\ud k\ud y\\
&+\frac{1}{2\pi}\int_{\Gamma^{(1)}} e^{ik^3(T-t)+\frac{ikx}{\sigma_1}}c_2(k)\left(\int_{-\infty}^0 q^{(1)}(y,T)e^{\frac{-ik\alpha y)}{\sigma_1}}\right)\ud k\ud y\\
&+\frac{1}{2\pi}\int_{\Gamma^{(1)}} e^{ik^3(T-t)+\frac{ikx}{\sigma_1}}c_3(k)\left( \int_{0}^\infty q^{(2)}(y,T)e^{\frac{-iky}{\sigma_2}}\right)\ud k\ud y,
\end{align*}
where $c_1(k)$, $c_2(k)$, and $c_3(k)$ are $\mathcal{O}(1)$, analytic in $D^{(5)}$, and involve the constants $\beta_{j,\ell}$.  For $k\to\infty$ with $k\in\overline{D}^{(5)}$ the expression inside the parentheses  decays by the Riemann-Lebesgue Lemma.  Thus, we can apply Jordan's Lemma and Cauchy's Theorem as in Case 1, using curves shown in Figure~\ref{fig:LKdV_DR_close}.   Hence,~\eqref{crule1} is zero.  Again, a similar argument holds for~\eqref{crule2}.

\item[Case 3.  $\sigma_1>0, \sigma_2<0$:]

For $\sigma_1>0$ and $\sigma_2<0$, $\Gamma^{(1)},\Gamma^{(2)}=\partial D_R^{(5)}$.  Using the form of $\mathcal{Y}$ in this case each piece of the integrand in~\eqref{crule1} is of the form
\begin{align*}
%\frac{1}{2\pi}\int_{\Gamma^{(1)}}e^{i\frac{k x}{\sigma_1}+ik^3(T-t)}\frac{\det(\mathcal{A}_{j})}{\det(\mathcal{A})}\ud k=
{}&\frac{1}{2\pi}\int_{\Gamma^{(1)}} e^{i\frac{k x}{\sigma_1}+ik^3(T-t)}\left(c_1(k) \hat{q}^{(1)}\left(\frac{\alpha k}{\sigma_1},t\right)+c_2(k) \hat{q}^{(2)}\left(\frac{\alpha k}{\sigma_2},T\right)\right)\ud k\\
&+\frac{1}{2\pi}\int_{\Gamma^{(1)}} e^{i\frac{k x}{\sigma_1}+ik^3(T-t)}\left(c_3(k) \hat{q}^{(1)}\left(\frac{\alpha^2 k}{\sigma_1},T\right)+c_4(k) \hat{q}^{(2)}\left(\frac{\alpha^2 k}{\sigma_2},T\right) \right)\ud k\\
={}&\frac{1}{2\pi} \int_{\Gamma^{(1)}}e^{ik^3(T-t)+\frac{ikx}{\sigma_1}}c_1(k)\left( \int_{-\infty}^0q^{(1)}(y,T)e^{\frac{-ik\alpha y}{\sigma_1}}\right)\ud k \ud y\\
&+\frac{1}{2\pi}\int_{\Gamma^{(1)}}e^{ik^3(T-t)+\frac{ikx}{\sigma_1}}c_2(k)\left( \int_{0}^\infty q^{(2)}(y,T)e^{\frac{-ik\alpha y}{\sigma_2}}\right)\ud k \ud y\\
&+\frac{1}{2\pi}\int_{\Gamma^{(1)}}e^{ik^3(T-t)+\frac{ikx}{\sigma_1}}c_3(k)\left(\int_{-\infty}^0 q^{(1)}(y,T) e^{\frac{-ik\alpha^2 y}{\sigma_1}}\right)\ud k \ud y\\
&+\frac{1}{2\pi}\int_{\Gamma^{(1)}}e^{ik^3(T-t)+\frac{ikx}{\sigma_1}}c_4(k)\left(\int_{0}^\infty q^{(2)}(y,T) e^{\frac{-ik\alpha^2y}{\sigma_2}}\right)\ud k \ud y,
\end{align*}
where $c_1(k),c_2(k), c_3(k)$ and $c_4(k)$ are  $\mathcal{O}(1)$, involve the constants $\beta_{j,\ell}$, and are analytic for $k\in D^{(5)}$.  For $k\to\infty$ with $k\in\overline{D}^{(5)}$ the expressions inside the parentheses decays by the Riemann-Lebesgue Lemma.  As before we apply Jordan's Lemma to the appropriate curves in Figure~\ref{fig:LKdV_DR_close} and use Cauchy's Theorem.  Thus,~\eqref{crule1} is zero.  A similar argument holds for~\eqref{crule2}.

\end{description}
%Taking derivatives in $x$ and $t$ it is clear that the expression~\eqref{dsolns_2i_t} satisfies the PDE.  When $t={}0$ the second integrals in~\eqref{dsoln1_2i} and~\eqref{dsoln2_2i} evaluate to zero and the expressions clearly give the initial condition $q(x,0)={}q_0(x)$.
\end{proof}

%\vspace{.4in}
%{\bf Remark.}
%We have not shown that~\eqref{dsolns_2i_t} satisfies the interface conditions directly.  Showing this is more involved than showing it satisfies the initial conditions.  The process requires solving $\mathcal{A}X={}Y$ for $X$ and evaluating the explicit expression at the interface ($x={}0$) and using the properties of Fourier transforms.
%\vspace{.4in}

\vspace{.4in}
{\bf Remark.}  We show in Proposition~\ref{prop:soln} that~\eqref{dsolns_2i_t} is a solution of problem~\eqref{LKdV_2i}.  It remains to show that this solution is unique in order to establish well posedness.  In an attempt to show uniqueness, we assume there exist two solutions to~\eqref{LKdV_2i}.  Let $u(x,t)$ be their difference. Then $u(x,t)$ satisfies~\eqref{LKdV_2i} with homogenous initial and interface conditions.  A standard energy argument shows 
\begin{equation}\label{wp_energy}
\begin{split}
\frac{\ud}{\ud t}\int_{-\infty}^\infty \left(\frac{\partial^n}{\partial x^n}u(x,t)\right)^2\ud x={}&\sigma_1\left(2 \frac{\partial^{n}}{\partial x^{n}} u^{(1)}(0,t) \frac{\partial^{n+2}}{\partial x^{n+2}}u^{(1)}(0,t)-\left( \frac{\partial^{n+1}}{\partial x^{n+1}}u^{(1)}(0,t)\right)^2\right)\\
&-\sigma_2\left(2 \frac{\partial^{n}}{\partial x^{n}} u^{(2)}(0,t) \frac{\partial^{n+2}}{\partial x^{n+2}}u^{(2)}(0,t)-\left( \frac{\partial^{n+1}}{\partial x^{n+1}}u^{(2)}(0,t)\right)^2\right)
\end{split}
\end{equation}
for any nonnegative integer $n$.  If the interface conditions given are such that the right-hand side of~\eqref{wp_energy} is always negative then, because $u(x,0)={}0$ and the left-hand side of~\eqref{wp_energy} is always non-negative, we have $u(x,t)\equiv0$.  Thus, one  suitable choice of interface conditions is those that satisfy this relationship.  For various signs of $\sigma_1,\sigma_2$, which we consider here, it is not clear how to establish that the solution~\eqref{dsolns_2i_t} is unique in general.  %As discussed in Section~\ref{sec:motivation} we expect different combinations of signs of $\sigma_1$ and $\sigma_2$ to require different numbers of interface conditions.  

%%%%%%%%%%%%%%%%%%%%%%%%%%%%%%%%%%%%%%%%%%%%%%%%%%%
\section{Examples}
In this section we give solutions to~\eqref{LKdV_2i} for different signs of $\sigma_1$ and $\sigma_2$ with ``canonical interface conditions."  That is, we  prescribe that the function and its first $N$ spatial derivatives are continuous across the interface where $1\leq N\leq 3$ depends on the signs of $\sigma_1$ and $\sigma_2$.
\begin{description}

\item[Example 1. $\sigma_1<0, \sigma_2>0$:]
This example requires four interface conditions.  We impose that the function, as well as its first, second, and third derivatives are continuous across the boundary.

\begin{equation*}\label{np_interfacecond}
\begin{split}
q^{(1)}(0,t)&={}q^{(2)}(0,t),\\
q^{(1)}_x(0,t)&={}q^{(2)}_x(0,t),\\
q^{(1)}_{xx}(0,t)&={}q^{(2)}_{xx}(0,t),\\
q^{(1)}_{xxx}(0,t)&={}q^{(2)}_{xxx}(0,t),\\
\end{split}
\end{equation*}
The first three conditions can be imposed directly.  The condition on the third spatial derivative can be imposed by applying the equation and integrating in $t$ to give~\eqref{3deriv_const}.  Applying the $t$ transform to~\eqref{3deriv_const} we have
\begin{equation}\label{gh_ic}
\frac{1}{\sigma_1^3}g_0(ik^3,T)-\frac{1}{\sigma_2^3}h_0(ik^3,T)=\frac{e^{ik^3T}-1}{ik^3}\left(\frac{1}{\sigma_1^3}q^{(1)}_0(0)-\frac{1}{\sigma_2^3}q^{(2)}_0(0)\right).
\end{equation}

Using elementary row operations, we have, in the notation of Proposition~\ref{prop:sigma_np}, $f_1(t)=f_2(t)=f_3(t)=0$, $f_4(t)=\frac{e^{ik^3t}-1}{ik^3}\left(\frac{1}{\sigma_1^3}q^{(1)}_0(0)-\frac{1}{\sigma_2^3}q^{(2)}_0(0)\right)$, $\beta_{25}=\beta_{36}=-1$, and the remaining $\beta_{j,\ell}={}0$.  Using these interface conditions and solving~\eqref{linsystem}, Equation~\eqref{dsolns_2i_t} becomes

%\begin{subequations}\label{final_solns_2i_np}
\begin{equation*}
\begin{split}
q^{(1)}(x,t)={}&\frac{1}{2\pi}\int_{-\infty}^\infty e^{ikx-\omega_1t}\hat{q}^{(1)}_0(k)\ud k\\
&+\int_{\partial D_R^{(1)}}  \frac{\alpha^2\sigma_1-\sigma_2}{2\pi\alpha^2\sigma_1(\sigma_1-\sigma_2)} e^{\frac{ikx}{\sigma_1}-ik^3t}  \hat{q}^{(1)}_0\left(\frac{\alpha^2 k}{\sigma_1}\right) \ud k\\
&+\int_{\partial D_R^{(1)}} \frac{i(\alpha-1)}{2\pi\alpha^2 k\sigma_1\sigma_2(\sigma_1-\sigma_2)} (e^{-ik^3t}-1)e^{\frac{ikx}{\sigma_1}} q^{(1)}_0\left(0\right) \ud k\\
&+\int_{\partial D_R^{(1)}} \frac{\sigma_1^2(\alpha^2-1)}{2\pi\alpha^2\sigma_2^2(\sigma_1-\sigma_2)} e^{\frac{ikx}{\sigma_1}-ik^3t}  \hat{q}^{(2)}_0\left(\frac{\alpha^2 k}{\sigma_2}\right) \ud k\\
&-\int_{\partial D_R^{(1)}}  \frac{i\sigma_1^2(\alpha-1)}{2\pi\alpha^2 k\sigma_2^4(\sigma_1-\sigma_2)} (e^{-ik^3t}-1)e^{\frac{ikx}{\sigma_1}} q^{(2)}_0\left(0\right) \ud k\\
&+\int_{\partial D_R^{(3)}}  \frac{\alpha\sigma_1-\sigma_2}{2\pi k\alpha\sigma_1(\sigma_1-\sigma_2)} e^{\frac{ikx}{\sigma_1}-ik^3t}  \hat{q}^{(1)}_0\left(\frac{\alpha k}{\sigma_1}\right) \ud k\\
&+\int_{\partial D_R^{(3)}} \frac{i(\alpha^2-1)}{2\pi k(\sigma_1^3-\sigma_2^3)} (e^{-ik^3t}-1)e^{\frac{ikx}{\sigma_1}} q^{(1)}_0\left(0\right) \ud k\\
&+\int_{\partial D_R^{(3)}} \frac{\sigma_1^2(\alpha-1)}{2\pi\alpha\sigma_2^2(\sigma_1-\sigma_2)} e^{\frac{ikx}{\sigma_1}-ik^3t}  \hat{q}^{(2)}_0\left(\frac{\alpha k}{\sigma_2}\right) \ud k\\
&-\int_{\partial D_R^{(3)}}  \frac{i\sigma_1^2(\alpha^2-1)}{2\pi\sigma_2^4 \alpha k(\sigma_1-\sigma_2)} (e^{-ik^3t}-1)e^{\frac{ikx}{\sigma_1}} q^{(2)}_0\left(0\right) \ud k,
\end{split}
\end{equation*}
\begin{equation*}
\begin{split}
q^{(2)}(x,t)={}&\frac{1}{2\pi}\int_{-\infty}^\infty e^{ikx-\omega_2t}\hat{q}^{(2)}_0(k)\ud k\\
&+\int_{\partial D_R^{(1)}}  \frac{\sigma_2^2(\alpha^2-1)}{2\pi\alpha^2\sigma_1^2(\sigma_1-\sigma_2)} e^{\frac{ikx}{\sigma_2}-ik^3t}  \hat{q}^{(1)}_0\left(\frac{\alpha^2 k}{\sigma_1}\right) \ud k\\
&+\int_{\partial D_R^{(1)}} \frac{i(\alpha-1)(\sigma_2-\alpha\sigma_1+\alpha\sigma_2)}{2\pi\alpha^2 k\sigma_1^3(\sigma_1-\sigma_2)} (e^{-ik^3t}-1)e^{\frac{ikx}{\sigma_2}} q^{(1)}_0\left(0\right) \ud k\\
&+\int_{\partial D_R^{(1)}} \frac{\alpha^2\sigma_2-\sigma_1}{2\pi\alpha^2\sigma_2(\sigma_1-\sigma_2)} e^{\frac{ikx}{\sigma_2}-ik^3t}  \hat{q}^{(2)}_0\left(\frac{\alpha^2 k}{\sigma_2}\right) \ud k\\
&-\int_{\partial D_R^{(1)}}  \frac{i(\alpha-1)(\alpha\sigma_1-\alpha\sigma_2-\sigma_2)}{2\pi\alpha^2 k\sigma_2^3(\sigma_1-\sigma_2)} (e^{-ik^3t}-1)e^{\frac{ikx}{\sigma_2}} q^{(2)}_0\left(0\right) \ud k\\
&+\int_{\partial D_R^{(3)}}  \frac{\sigma_2^2(\alpha-1)}{2\pi\alpha \sigma_1^2(\sigma_1-\sigma_2)} e^{\frac{ikx}{\sigma_2}-ik^3t}  \hat{q}^{(1)}_0\left(\frac{\alpha k}{\sigma_1}\right) \ud k\\
&-\int_{\partial D_R^{(3)}}  \frac{i(\alpha-1)(\sigma_2+\alpha\sigma_1)}{2\pi\alpha \sigma_1^3k(\sigma_1-\sigma_2)} (e^{-ik^3t}-1)e^{\frac{ikx}{\sigma_2}} q^{(1)}_0\left(0\right) \ud k\\
&+\int_{\partial D_R^{(3)}} \frac{\alpha\sigma_2-\sigma_1}{2\pi\alpha\sigma_2(\sigma_1-\sigma_2)} e^{\frac{ikx}{\sigma_2}-ik^3t}  \hat{q}^{(2)}_0\left(\frac{\alpha k}{\sigma_2}\right) \ud k\\
&-\int_{\partial D_R^{(3)}}  \frac{i(\alpha-1)(\sigma_2+\alpha\sigma_1)}{2\pi\alpha k\sigma_2^3(\sigma_1-\sigma_2)} (e^{-ik^3t}-1)e^{\frac{ikx}{\sigma_2}} q^{(2)}_0\left(0\right) \ud k.
\end{split}
\end{equation*}
%\end{subequations}
%Notice that we cannot transform~\eqref{final_solns_2i_np} back to the real line.

\vspace{.4in}
{\bf Remark.}
Combining~\eqref{3deriv_const} with the first interface condition of Example 1 and the mutual compatibility of $q_0^{(1)}(x)$ and $q_0^{(2)}(x)$ with respect to the interface conditions, we find that $q(0,t)$ is constant in time. That is, for all $t\geq0$, $q^{(1)}(0,t) = q_0^{(1)}(0) = q_0^{(2)}(0) = q_0^{(2)}(0,t)$. Note that this property of the interface problem is obtained without any reference to a solution method or any appeal to the first or second order interface conditions. The compatibility condition $q_0^{(1)}(0) = q_0^{(2)}(0)$ could also be used to simplify~\eqref{gh_ic}. We avoid this simplification because even without a smooth initial condition the formulae obtained in this paper are valid for $t>0$ and the short time asymptotics can be analyzed as in~\cite{BiondiniTrogdon}.

\item[Example 2. $\sigma_1>0, \sigma_2>0$:]
This example requires three interface conditions.  We impose that the function, as well as its first and second derivative are continuous across the boundary.  That is,
\begin{equation}\label{pp_interfacecond}
\begin{split}
q^{(1)}(0,t)&={}q^{(2)}(0,t),\\
q^{(1)}_{x}(0,t)&={}q^{(2)}_{x}(0,t),\\
q^{(1)}_{xx}(0,t)&={}q^{(2)}_{xx}(0,t).
\end{split}
\end{equation}

In the notation of Proposition~\ref{prop:sigma_pp} $f_1(t)={}f_2(t)={}f_3(t)={}0$, $\beta_{11}={}\beta_{22}={}\beta_{33}={}-1$, and the remaining $\beta_{j,\ell}={}0$.  Using the interface conditions~\eqref{pp_interfacecond} and solving~\eqref{linsystem}, Equation~\eqref{dsolns_2i_t} becomes

%\begin{subequations}\label{final_solns_2i_pp}
\begin{equation*}
\begin{split}
q^{(1)}(x,t)={}&\frac{1}{2\pi}\int_{-\infty}^\infty e^{ikx-\omega_1t}\hat{q}^{(1)}_0(k)\ud k\\
&+\int_{\partial D_R^{(5)}}  \frac{(\sigma_1-\sigma_2)(\sigma_1+\alpha\sigma_1+\alpha\sigma_2)}{2\pi\alpha\sigma_1(\sigma_1-\alpha \sigma_2)(\sigma_1+\sigma_2+\alpha\sigma_2)}  e^{\frac{ikx}{\sigma_1}-ik^3t}  \hat{q}_0^{(1)}\left(\frac{\alpha k}{\sigma_1}\right)  \ud k,\\
&+\int_{\partial D_R^{(5)}}\frac{\sigma_2-\sigma_1}{2\pi\alpha\sigma_1(\sigma_1+\sigma_2+\alpha\sigma_2)}  e^{\frac{ikx}{\sigma_1}-ik^3t}    \hat{q}_0^{(1)}\left(\frac{\alpha^2 k}{\sigma_1}\right)  \ud k,\\
&-\int_{\partial D_R^{(5)}} \frac{3\sigma_1^3}{2\pi\sigma_2(\sigma_1-\alpha \sigma_2)(\sigma_1+\sigma_2+\alpha\sigma_2)}  e^{\frac{ikx}{\sigma_1}-ik^3t}   \hat{q}_0^{(2)}\left(\frac{ k}{\sigma_2}\right)  \ud k,
\end{split}
\end{equation*}
\begin{equation*}
\begin{split}
q^{(2)}(x,t)={}&\frac{1}{2\pi}\int_{-\infty}^\infty e^{ikx-\omega_2t}\hat{q}^{(2)}_0(k)\ud k\\
&-\int_{\partial D_R^{(1)}} \frac{\sigma_2(\sigma_1^2+\sigma_1\sigma_2-\sigma_2^2)}{2\pi\sigma_1^2(\sigma_1^2+\alpha(1+\alpha)\sigma_1\sigma_2-\sigma_2^2)} e^{\frac{ikx}{\sigma_2}-ik^3t} \hat{q}_0^{(1)}\left(\frac{k}{\sigma_1}\right) \ud k\\
&+\int_{\partial D_R^{(1)}}  \frac{\sigma_2(\sigma_2(\sigma_1+\sigma_2)+\alpha(\sigma_1^2+\sigma_2^2))}{2\pi\alpha\sigma_1^2(\sigma_1^2+\alpha(1+\alpha)\sigma_1\sigma_2-\sigma_2^2)} e^{\frac{ikx}{\sigma_2}-ik^3t} \hat{q}_0^{(1)}\left(\frac{\alpha k}{\sigma_1}\right) \ud k\\
&-\int_{\partial D_R^{(1)}}  \frac{\sigma_1^2+(1+\alpha)\sigma_1\sigma_2-\alpha\sigma_2^2}{2\pi\alpha\sigma_2(\sigma_1^2+\alpha(1+\alpha)\sigma_1\sigma_2-\sigma_2^2)} e^{\frac{ikx}{\sigma_2}-ik^3t} \hat{q}_0^{(2)}\left(\frac{\alpha^2 k}{\sigma_2}\right) \ud k\\
&+\int_{\partial D_R^{(3)}} \frac{\sigma_2(\sigma_1^2+\sigma_1\sigma_2-\sigma_2^2)}{2\pi\sigma_1^2(\alpha\sigma_1^2(1+\alpha)+\sigma_2(\sigma_1+\sigma_2))} e^{\frac{ikx}{\sigma_2}-ik^3t}\hat{q}_0^{(1)}\left(\frac{k}{\sigma_1}\right) \ud k\\
&+\int_{\partial D_R^{(3)}}  \frac{\sigma_2(\alpha \sigma_1(\sigma_2-\sigma_1)+\sigma_2(\sigma_1+\sigma_2))}{2\pi\alpha\sigma_1^2(\alpha\sigma_1^2(1+\alpha)+\sigma_2(\sigma_1+\sigma_2))} e^{\frac{ikx}{\sigma_2}-ik^3t} \hat{q}_0^{(1)}\left(\frac{\alpha^2 k}{\sigma_1}\right) \ud k\\
&-\int_{\partial D_R^{(3)}} \frac{(1+\alpha)\sigma_1^2+\sigma_1\sigma_2+\alpha \sigma_2^2}{2\pi\alpha\sigma_2(\alpha\sigma_1^2(1+\alpha)+\sigma_2(\sigma_1+\sigma_2))} e^{\frac{ikx}{\sigma_2}-ik^3t}  \hat{q}_0^{(2)}\left(\frac{\alpha k}{\sigma_2}\right) \ud k.
\end{split}
\end{equation*}
%\end{subequations}

%%%%%%%%%%%%%%%%%%%%%%%%%%%%%%%%%%%%%%
\item[Example 3. $\sigma_1>0, \sigma_2<0$:]
This example requires two interface conditions.  We impose that the function and its first derivative are continuous across the boundary.  That is,
\begin{equation}\label{pn_interfacecond}
\begin{split}q^{(1)}(0,t)&={}q^{(2)}(0,t),\\
q^{(1)}_x(0,t)&={}q^{(2)}_x(0,t).
\end{split}
\end{equation}

In the notation of Proposition~\ref{prop:sigma_pn} $f_1(t)={}f_2(t)={}0$, $\beta_{15}={}\beta_{25}={}-1$, and the remaining $\beta_{j,\ell}={}0$.  Using the interface conditions~\eqref{pn_interfacecond} and solving~\eqref{linsystem}, Equation~\eqref{dsolns_2i_t} becomes

%\begin{subequations}\label{final_solns_2i_pn}
\begin{equation*}
\begin{split}
q^{(1)}(x,t)={}&\frac{1}{2\pi}\int_{-\infty}^\infty e^{ikx-\omega_1t}\hat{q}^{(1)}_0(k)\ud k+\int_{\partial D_R^{(5)}} \frac{\sigma_1+\alpha\sigma_1-\sigma_2}{2\pi\alpha\sigma_1(\sigma_1+\sigma_2)}e^{\frac{ikx}{\sigma_1}-ik^3t}  \hat{q}_0^{(1)}\left(\frac{\alpha k}{\sigma_1}\right)  \ud k\\
&+\int_{\partial D_R^{(5)}}  \frac{\sigma_2+\alpha\sigma_2-\sigma_1}{2\pi\alpha\sigma_1(\sigma_1+\sigma_2)}e^{\frac{ikx}{\sigma_1}-ik^3t} \hat{q}_0^{(1)}\left(\frac{\alpha^2 k}{\sigma_1}\right)  \ud k\\
&+\int_{\partial D_R^{(5)}}  \frac{\sigma_1(2+\alpha)}{2\pi\alpha\sigma_2(\sigma_1+\sigma_2)}e^{\frac{ikx}{\sigma_1}-ik^3t} \hat{q}_0^{(2)}\left(\frac{\alpha k}{\sigma_2}\right)  \ud k\\
&-\int_{\partial D_R^{(5)}} \frac{\sigma_1(2+\alpha)}{2\pi\alpha\sigma_2(\sigma_1+\sigma_2)}e^{\frac{ikx}{\sigma_1}-ik^3t}  \hat{q}_0^{(2)}\left(\frac{\alpha^2 k}{\sigma_2}\right)  \ud k,
\end{split}
\end{equation*}
\begin{equation*}
\begin{split}
q^{(2)}(x,t)={}&\frac{1}{2\pi}\int_{-\infty}^\infty e^{ikx-\omega_2t}\hat{q}^{(2)}_0(k)\ud k-\int_{\partial D_R^{(5)}} \frac{\sigma_2(2+\alpha)}{2\pi\alpha\sigma_1(\sigma_1+\sigma_2)} e^{\frac{ikx}{\sigma_2}-ik^3t}  \hat{q}_0^{(1)}\left(\frac{\alpha k}{\sigma_1}\right)  \ud k\\
&+\int_{\partial D_R^{(5)}} \frac{\sigma_2(2+\alpha)}{2\pi\alpha\sigma_1(\sigma_1+\sigma_2)} e^{\frac{ikx}{\sigma_2}-ik^3t} \hat{q}_0^{(1)}\left(\frac{\alpha^2 k}{\sigma_1}\right)  \ud k\\
&-\int_{\partial D_R^{(5)}}  \frac{\sigma_2+\alpha\sigma_2-\sigma_1}{2\pi\alpha\sigma_2(\sigma_1+\sigma_2)} e^{\frac{ikx}{\sigma_2}-ik^3t} \hat{q}_0^{(2)}\left(\frac{\alpha k}{\sigma_2}\right)  \ud k\\
&-\int_{\partial D_R^{(5)}}  \frac{\sigma_1+\alpha\sigma_1-\sigma_2}{2\pi\alpha\sigma_2(\sigma_1+\sigma_2)} e^{\frac{ikx}{\sigma_2}-ik^3t}  \hat{q}_0^{(2)}\left(\frac{\alpha^2 k}{\sigma_2}\right)  \ud k.
\end{split}
\end{equation*}
%\end{subequations}

\end{description}

\section*{Acknowledgements}
N.E.S. acknowledges support from the National Science Foundation under grant number NSF-DGE-0718124.  Any opinions, findings, and conclusions or recommendations expressed in this material are those of the authors and do not necessarily reflect the views of the funding sources.
\bibliographystyle{plain}
\bibliography{./FullBib}

\begin{thebibliography}{10}

\bibitem{AblowitzFokas}
M.J. Ablowitz and A.S. Fokas.
\newblock {\em Complex variables: Introduction and Applications}.
\newblock Cambridge Texts in Applied Mathematics. Cambridge University Press,
  Cambridge, second edition, 2003.

\bibitem{AS}
M.J. Ablowitz and H.~Segur.
\newblock {\em Solitons and the inverse scattering transform}, volume~4 of {\em
  SIAM Studies in Applied Mathematics}.
\newblock Society for Industrial and Applied Mathematics (SIAM), Philadelphia,
  PA, 1981.

\bibitem{Asvestas}
M.~Asvestas, A.G. Sifalakis, E.P. Papadopoulou, and Y.G. Saridakis.
\newblock Fokas method for a multi-domain linear reaction-diffusion equation
  with discontinuous diffusivity.
\newblock {\em Journal of Physics: Conference Series}, 490(1):012143, 2014.

\bibitem{BiondiniTrogdon}
G.~Biondini and T.~Trogdon.
\newblock Gibbs phenomenon for dispersive {PDE}s.
\newblock {\em arXiv preprint arXiv:1411.6142}, 2015.

\bibitem{Cramer}
G.~Cramer.
\newblock {\em Introduction {\'a} l'analyse des lignes courbes
  alg{\'e}briques}.
\newblock Fr{\'e}res Cramer et C. Philibert, 1750.

\bibitem{DeconinckPelloniSheils}
B.~Deconinck, B.~Pelloni, and N.E. Sheils.
\newblock Non-steady state heat conduction in composite walls.
\newblock {\em Proc. R. Soc. A}, 470(2165):22, March 2014.

\bibitem{DeconinckTrogdonVasan}
B.~Deconinck, T.~Trogdon, and V.~Vasan.
\newblock The method of {F}okas for solving linear partial differential
  equations.
\newblock {\em SIAM Rev.}, 56(1):159--186, 2014.

\bibitem{FokasBook}
A.S. Fokas.
\newblock {\em A unified approach to boundary value problems}, volume~78 of
  {\em CBMS-NSF Regional Conference Series in Applied Mathematics}.
\newblock Society for Industrial and Applied Mathematics (SIAM), Philadelphia,
  PA, 2008.

\bibitem{FokasPelloni4}
A.S. Fokas and B.~Pelloni.
\newblock A transform method for linear evolution {PDE}s on a finite interval.
\newblock {\em IMA J. Appl. Math.}, 70(4):564--587, 2005.

\bibitem{Hirota}
R.~Hirota.
\newblock Exact solution of the {K}orteweg--de {V}ries equation for multiple
  collisions of solitons.
\newblock {\em Physical Review Letters}, 27(18):1192--1194, 1971.

\bibitem{Kevorkian}
J.~Kevorkian.
\newblock {\em Partial differential equations}, volume~35 of {\em Texts in
  Applied Mathematics}.
\newblock Springer-Verlag, New York, second edition, 2000.

\bibitem{Levin}
D.~Levin.
\newblock Fast integration of rapidly oscillatory functions.
\newblock {\em J. Comput. Appl. Math.}, 67(1):95--101, 1996.

\bibitem{Mantzavinos}
D.~Mantzavinos, M.G. Papadomanolaki, Y.G. Saridakis, and A.G. Sifalakis.
\newblock Fokas transform method for a brain tumor invasion model with
  heterogeneous diffusion in dimensions.
\newblock {\em Applied Numerical Mathematics}, pages~--, 2014.

\bibitem{Miura}
R.M. Miura.
\newblock {K}orteweg-de {V}ries equation and generalizations. {I}. {A}
  remarkable explicit nonlinear transformation.
\newblock {\em Journal of Mathematical Physics}, 9(8):1202--1204, 1968.

\bibitem{MiuraGardnerKruskal}
R.M. Miura, C.S. Gardner, and M.D. Kruskal.
\newblock {K}orteweg-de {V}ries equation and generalizations. {II}. {E}xistence
  of conservation laws and constants of motion.
\newblock {\em Journal of Mathematical physics}, 9(8):1204--1209, 1968.

\bibitem{SheilsDeconinck_PeriodicHeat}
N.E. Sheils and B.~Deconinck.
\newblock Heat conduction on the ring: {I}nterface problems with periodic
  boundary conditions.
\newblock {\em Appl. Math. Lett.}, 37:107--111, 2014.

\bibitem{SheilsDeconinck_LS}
N.E. Sheils and B.~Deconinck.
\newblock Interface problems for dispersive equations.
\newblock {\em Studies in Applied Mathematics}, 134(3):253--275, 2015.

\bibitem{SheilsDeconinck_LSp}
N.E. Sheils and B.~Deconinck.
\newblock The time-dependent {S}chr{\"o}dinger equation with piecewise constant
  potentials.
\newblock {\em In preparation}, 2015.

\bibitem{SheilsSmith}
N.E. Sheils and D.A. Smith.
\newblock Heat equation on a network using the {F}okas method.
\newblock {\em Journal of Physics A: Mathematical and Theoretical}, 48(33):21
  pp., 2015.

\bibitem{TrogdonThesis}
T.~Trogdon.
\newblock {\em {R}iemann--{H}ilbert Problems, Their Numerical Solution and the
  Computation of Nonlinear Special Functions}.
\newblock PhD thesis, University of Washington, 2012.

\bibitem{Trogdon}
T.~Trogdon.
\newblock A unified numerical approach for the {N}onlinear {S}chr{\"o}dinger
  {E}quations.
\newblock In A.S. Fokas and B.~Pelloni, editors, {\em {U}nified {T}ransform for
  {B}oundary {V}alue {P}roblems: Applications and Advances}. SIAM, 2015.

\bibitem{WangFokas}
Z.~Wang and A.S. Fokas.
\newblock Generalized {D}irichlet to {N}eumann maps for linear dispersive
  equations on the half-line.
\newblock {\em arXiv preprint arXiv:1409.2083}, 2014.

\bibitem{ZakharovFaddeev}
V.E. Zakharov and L.D. Faddeev.
\newblock {K}orteweg-de {V}ries equation: {A} completely integrable
  {H}amiltonian system.
\newblock {\em Functional Analysis and its Applications}, 5(4):280--287, 1971.

\end{thebibliography}

\end{document}